\newcommand{\eop}{\bigstar}  % end-of-proof
\newcommand{\dom}{{\rm dom}}
\newcommand{\OO}{{\mathcal O}}
\newenvironment{proof}{\noindent{\bf Proof.}}{\par\bigskip}
\newenvironment{Proof}{\noindent{\bf Proof.}}{\par\bigskip}
\newtheorem{THEOREM}{Theorem}[section]
\newtheorem{Conclusion}[THEOREM]{Conclusion}
\newtheorem{Hypothesis}[THEOREM]{Hypothesis}
\newtheorem{LEMMA}[THEOREM]{Lemma}
\newtheorem{Main Theorem}[THEOREM]{Main Theorem}
\newenvironment{main Theorem}{\begin{Main Theorem}}
{\end{Main Theorem}}
\newtheorem{Theorem}[THEOREM]{Theorem}
\newtheorem{Definition}[THEOREM]{Definition}
\newtheorem{Conventions}[THEOREM]{Conventions}
\newtheorem{Main Definition}[THEOREM]{Main Definition}
\newenvironment{main definition}{\begin{Main Definition}}
{\end{Main Definition}}
\newtheorem{Lemma}[THEOREM]{Lemma}
\newtheorem{Notation}[THEOREM]{Notation}
\newtheorem{Convention}[THEOREM]{Convention}
\newtheorem{Note}[THEOREM]{Note}
\newtheorem{Observation}[THEOREM]{Observation}
\newenvironment{observation}{\begin{Observation}}
{\end{Observation}}
\newtheorem{Remark}[THEOREM]{Remark}
\newtheorem{Question}[THEOREM]{Question}
\newtheorem{Main Fact}[THEOREM]{Main Fact}
\newenvironment{main Fact}{\begin{Main Fact}}{\end{Main Fact}}
\newtheorem{Fact}[THEOREM]{Fact}
\newtheorem{Subfact}[THEOREM]{Subfact}
\newtheorem{Claim}[THEOREM]{Claim}
\newenvironment{claim}{\begin{Claim}}{\end{Claim}}
\newtheorem{Main Claim}[THEOREM]{Main Claim}
\newenvironment{main claim}{\begin{Main Claim}}{\end{Main Claim}}
\newtheorem{Crucial Claim}[THEOREM]{Crucial Claim}
\newenvironment{crucial claim}{\begin{Crucial Claim}}{\end{Crucial Claim}}
\newtheorem{Subclaim}[THEOREM]{Subclaim}
\newtheorem{Corollary}[THEOREM]{Corollary}
\newtheorem{Example}[THEOREM]{Example}
\newtheorem{Problem}[THEOREM]{Problem}
\newtheorem{Proposition}[THEOREM]{Proposition}
\newtheorem{Discussion}[THEOREM]{Discussion}
\newenvironment{Proof of the Subfact}
{\noindent{\bf Proof of the Subfact.}}{\par\bigskip}
\newenvironment{Proof of the Theorem}
{\noindent{\bf Proof of the Theorem.}}{\par\bigskip}
\newenvironment{Proof of the Proposition}
{\noindent{\bf Proof of the Proposition.}}{\par\bigskip}
\newenvironment{Proof of the Conclusion}
{\noindent{\bf Proof of the Conclusion.}}{\par\bigskip}
\newenvironment{Proof of the Observation}
{\noindent{\bf Proof of the Observation.}}{\par\bigskip}
\newenvironment{Proof of the Fact}
{\noindent{\bf Proof of the Fact.}}{\par\bigskip}
\newenvironment{Proof of the Lemma}
{\noindent{\bf Proof of the Lemma.}}{\par\bigskip}
\newenvironment{Proof of the Claim}
{\noindent{\bf Proof of the Claim.}}{\par\bigskip}
\newenvironment{Proof of the Corollary}
{\noindent{\bf Proof of the Corollary.}}{\par\bigskip}
\newenvironment{Proof of the Subclaim}
{\noindent{\bf Proof of the Subclaim.}}{\par\medskip}
\newenvironment{Proof of the Main Claim}
{\noindent{\bf Proof of the Main Claim.}}{\par\bigskip}
\newenvironment{Proof of the Crucial Claim}
{\noindent{\bf Proof of the Crucial Claim.}}{\par\bigskip}
\def\@begintheorem#1#2{\rm \trivlist \item[\hskip \labelsep{\bf #1\ #2.}]}
\def\@opargbegintheorem#1#2#3{\rm \trivlist
      \item[\hskip \labelsep{\bf #1\ #2\ (#3).}]}
\def\@&{\hskip2pt \&\hskip2pt}
\newcommand{\into}{\rightarrow}
\newcommand{\rest}{\upharpoonright}  % restriction
\newcommand{\KK}{{\cal K}}
\newcommand{\PP}{{\cal P}}
\newcommand{\concat}{\kern-.25pt\raise4pt\hbox{$\frown$}\kern-.25pt}
\newcommand{\piba}{\pi_\alpha^\beta}  % from 2^beta to 2^alpha
\newcommand{\pioa}{\pi^{\omega_1}_\alpha}
\def\mathunderaccent#1#2 {\let\theaccent#1\skewfactor#2
\mathpalette\putaccentunder}
\def\putaccentunder#1#2{\oalign{$#1#2$\crcr\hidewidth
\vbox to.2ex{\hbox{$#1\skew\skewfactor\theaccent{}$}\vss}\hidewidth}}
\begin{document}

\title{$CH$, a problem of Rolewicz and bidiscrete systems}

\author{Mirna D\v zamonja\footnote{School of Mathematics, University of East Anglia, Norwich NR4 7TJ,
UK, h020@uea.ac.uk} \,\,and Istv\'an Juh\'asz\footnote{Alfr\'ed R\'enyi Institute of Mathematics,
Hungarian Academy of Sciences, H-1053 Budapest, Re\'altanoda u. 13-15., H-1364 Budapest, P. O. Box: 127, Hungary, juhasz@renyi.hu}}

%\date{}

\maketitle

\begin{abstract} We give a construction under $CH$ of a non-metrizable compact Hausdorff space $K$
such that any uncountable semi-biorthogonal sequence in $C(K)$ must be of a very specific kind.
The space $K$ has many nice properties, such as being hereditarily separable, hereditarily Lindel\"of and a 2-to-1
continuous preimage of a metric space, and all Radon measures on $K$ are separable.
However $K$ is not a Rosenthal compactum.

We introduce the notion of a {\em bidiscrete system} in a compact
space $K$. These are subsets of $K^2$ which determine biorthogonal
systems of a special kind in $C(K)$ that we call {\em nice}. We
note that every infinite compact Hausdorff space $K$ has a
bidiscrete system and hence a nice biorthogonal system of size
$d(K)$, the density of $K$. \footnote{M. D\v zamonja thanks EPSRC
for their support through the grant EP/G068720 and I. Juh\'asz
thanks the Hungarian National Foundation for Scientific Research
(OTKA) for their support from grants K 61600 and K 68262, as well
as to the University of East Anglia for supporting a one week
visit in November 2008. We both thank the Mittag-Leffler Institute
for their hospitality in September 2009 when part of this research
was done.}
\end{abstract}

\section{Introduction}\label{intro}
All topological spaces mentioned here are Hausdorff. As is traditional
in general Banach space theory, Banach spaces we mention
are considered as real Banach spaces even though nothing essential changes in the
context of complex spaces. Throughout let $K$ stand for an infinite compact topological space.

Let $X$ be a Banach space and $C$ a closed
convex subset of $X$.
A point $x_0\in C$ is a {\em point of support for} $C$ if there
is a functional $\varphi\in X^\ast$ such that $\varphi(x_0)\le\varphi(x)$
for all $x\in C$, and $\varphi(x_0)<\varphi(x')$ for some $x'\in C$.

Rolewicz  \cite{Ro} proved in 1978 that every separable closed convex
subset $Y$ of a Banach space contains a point
which is not a point of support for $Y$, and asked if every
non-separable Banach space must contain a closed convex set
containing only points of support. In fact, this topic was already considered
by Klee \cite{Klee} in 1955 and the above theorem follows from 2.6
in that paper, by the same proof and taking $x_i$s to form
a dense set in $C$ \footnote{We thank Libor Vesely for pointing
out this, not widely recognised, connection.}. However it was Rolewicz's paper
which started a whole series of articles on this topic, and his question
has not yet been settled completely. It is known that the answer to
Rolewicz's question is independent of ZFC, and it is still not known if
the negative answer follows from $CH$. In \S\ref{semi} we
construct a $CH$ example of a nonseparable Banach space of the form $C(K)$ which violates
a strenghtening of the requirements in the Rolewicz's question.

The proof in \S\ref{semi} uses certain systems of pairs of points
of $K$, whose structure seems to us to be of independent interest.
They appear implicitly in many proofs about biorthogonal systems
in spaces of the form $C(K)$, see \cite{Hajekbook}, but their
existence is in fact entirely a property of the compact space $K$.
We call such systems {\em bidiscrete systems}. They are studied in
\S \ref{bidiscr}. Specifically, we prove in Theorem \ref{lemma2}
that if $K$ is an infinite compact Hausdorff space then $K$ has a
bidiscrete system of size $d(K)$, the density of $K$. This theorem
has not been stated in this form before, but we note that an
argument by Todor\v cevi\'c in \cite{StevoMM} can be easily
extended to give this result.

We now give some historical background. Mathematical background will be presented in
Section \ref{background}.

Borwein and Vanderwerff \cite{Bv} proved in 1996 that, in a Banach
space $X$, the existence of a closed convex set all of whose
points are support points is equivalent to the existence of an
uncountable semi-biorthogonal sequence for $X$, where
semi-biorthogonal sequences are defined as follows:

\begin{Definition}\label{biorthognal} Let $X$ be a Banach space. A sequence
$\langle (f_\alpha, \varphi_\alpha):\,\alpha<\alpha^\ast\rangle$ in $X\times X^\ast$ is said
to be
a {\em semi-biorthogonal sequence} if for all $\alpha,\beta<\alpha^\ast$ we have:
\begin{itemize}
\item $\varphi_\alpha(f_\alpha)=1$,
\item $\varphi_\alpha (f_\beta)=0$ if $\beta<\alpha$,
\item $\varphi_\alpha (f_\beta)\ge 0$ if $\beta>\alpha$.
\end{itemize}
\end{Definition}

We remind the reader of the better known notion of {\em a biorthogonal system}
$\{(f_\alpha, \varphi_\alpha):\,\alpha<\alpha^\ast\}$ in $X\times X^\ast$ which is defined to satisfy the first
item of the Definition \ref{biorthognal} with the last two items are strengthened to
\begin{itemize}
\item $\varphi_\alpha (f_\beta)=0$ if $\beta\neq\alpha$.
\end{itemize}
Notice that the requirements of a semi-biorthogonal sequence make it clear that we really need a well ordering
of the sequence
in the definition, but that the definition of a biorthogonal system does not require an underlying  well-ordering.
There is nothing special about the values 0 and 1 in the above definitions, of course, and we could
replace them by any pair $(a,b)$ of distinct values in $\mathbb R$ and even let $b=b_\alpha$ vary with
$\alpha$. Equally, we could require the range of all $f_\alpha$ to be in $[0,1]$ or some other fixed nonempty
closed interval.

Obviously, any well-ordering of a biorthogonal system gives a
semi-biorthogonal sequence. On the other hand, there is an example
by Kunen under $CH$ of a nonmetrizable compact scattered space $K$
for which $X=C(K)$ does not have an uncountable biorthogonal
system, as proved by Borwein and Vanderwerff in \cite{Bv}. Since
$K$ is scattered, it is known that $X$ must have an uncountable
semi-biorthogonal sequence  (see \cite{Hajekbook} for a
presentation of a similar example under $\clubsuit$ and a further
discussion). Let us say that a Banach space is a {\em Rolewicz
space} if it is nonseparable but does not have an uncountable
semi-biorthogonal sequence.

In his 2006 paper \cite{StevoMM}, Todor\v cevi\'c proved that
under Martin's Maximum (MM) every non-separable Banach space has
an uncountable biorthogonal system, so certainly it has an
uncountable semi-biorthogonal sequence. Hence, under MM there are
no Rolewicz spaces. On the other hand, Todor\v cevi\'c informed us
that he realized in 2004 that a forcing construction in \cite{bgt}
does give a consistent example of a Rolewicz space. Independently,
also in 2004 (published in 2009), Koszmider gave a similar forcing construction in
\cite{Kosz}. It is still not known if there has to be a Rolewicz
space under CH.

Our motivation was to construct a Rolewicz space of the form
$X=C(K)$ under CH. Unfortunately, we are not able to do so, but we
obtain in Theorem \ref{CH} a space for which we can at least show
that it satisfies most of the known necessary conditions for a
Rolewicz space and that it has no uncountable semi-bidiscrete
sequences of the kind that are present in the known failed
candidates for such a space, for example in $C(S)$ where $S$ is
the split interval.

Specifically, it is known that if $K$ has a non-separable Radon
measure or if it is scattered then $C(K)$ cannot be Rolewicz
(\cite{GJM}, \cite{Hajekbook}) and our space does not have either
of these properties. Further, it is known that a compact space $K$
for which $C(K)$ is a Rolewicz space must be both HS and HL
(\cite{Lazar}, \cite{Bv}) while not being metrizable, and our
space has these properties, as well. It follows from the
celebrated structural results on Rosenthal compacta by Todor{\v
c}evi\'c in \cite{StevoRos} that a Rosenthal compactum cannot be a
Rolewicz space, and our space is not Rosenthal compact. Finally,
our space is not metric but it is a 2-to-1 continuous preimage of
a metric space. This is a property possessed by the forcing
example in \cite{Kosz} and it is interesting because of a theorem
from \cite{StevoRos} which states that every non-metric Rosenthal
compact space which does not contain an uncountable discrete
subspace is a 2-to-1 continuous preimage of a metric space. Hence
the example in \cite{Kosz} is a space which is not Rosenthal
compact and yet it satisfies these properties, and so is our
space.

\section{Background}\label{background}

\begin{Definition}\label{nice} Let $X=C(K)$ be the Banach space of continuous functions
on a compact space $K$. We say that a sequence $\langle
(f_\alpha,\phi_\alpha):\,\alpha<\alpha^\ast\rangle$ in $X\times
X^\ast$ is a {\em nice} semi-biorthogonal sequence if it is a
semi-biorthogonal sequence and there are points $\langle
x^l_\alpha:\,l=0,1,\alpha<\alpha^\ast\rangle$ in $K$ such that
$\phi_\alpha=\delta_{x^1_\alpha}-\delta_{x^0_\alpha}$, where
$\delta$ denotes the Dirac measure. We similarly define nice
biorthogonal systems.
\end{Definition}

As Definition \ref{nice} mentions points of $K$ and $C(K)$ does
not uniquely determine $K$~\footnote{see e.g. Miljutin's theorem
\cite{Mil1}, \cite{Mil2} which states that for $K$, $L$ both
uncountable compact metrizable, the spaces $C(K)$ and $C(L)$ are
isomorphic.}, the definition is actually topological rather than
analytic. We shall observe below that the existence of a nice
semi-biorthogonal sequence of a given length or of a nice
biorthogonal system of a given size in $C(K)$ is equivalent to the
existence of objects which can be defined in terms which do not
involve the dual $C(K)^\ast$.

\begin{Definition}\label{nicetop} (1) A system
$\{(x_\alpha^0, x_\alpha^1):\,\alpha<\kappa\}$ of pairs of points
in $K$ (i.e. a subfamily of $K^2$) is called {\em a bidiscrete
system in} $K$ if there exist functions $\{
f_\alpha:\,\alpha<\kappa\} \subseteq C(K)$ satisfying that for every
$\alpha,\beta<\kappa$:
\begin{itemize}
\item $f_\alpha(x_\alpha^l)=l$ for $l\in \{0,1\}$,
\item if $\alpha\neq\beta$ then $f_\alpha(x_\beta^0)=f_\alpha(x_\beta^1)$.
\end{itemize}
\end{Definition}

(2) We similarly define semi-bidiscrete sequences in $K$ as
sequences $\langle (x_\alpha^0,
x_\alpha^1):\,\alpha<\alpha^\ast\rangle$ of points in $K^2$ that
satisfy the first requirement of (1) but instead of the second the
following two requirements:
\begin{itemize}
\item if $\alpha>\beta$ then $f_\alpha(x_\beta^0)=f_\alpha(x_\beta^1)$,
\item if $\alpha<\beta$ then $f_\alpha(x_\beta^0)=1\implies f_\alpha(x_\beta^1)=1$.
\end{itemize}

\begin{Observation}\label{nice=bidiscrete} For a compact space $K$,
$\{(x_\alpha^0, x_\alpha^1):\,\alpha<\alpha^*\} \subseteq K^2$ is a
bidiscrete system  iff there are $\{ f_\alpha:\,\alpha<\alpha^*\}
\subseteq C(K)$ such that $\{(f_\alpha,
\delta_{x^\alpha_1}-\delta_{x^\alpha_0}):\,\alpha<\alpha^*\}$ is a
nice biorthogonal system for the Banach space $X=C(K)$. The
analogous statement holds for nice semi-bidiscrete sequences.
\end{Observation}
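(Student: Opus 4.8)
The plan is to treat the statement as a direct dictionary between the two formulations, the only real content being a small normalization argument. Throughout, the governing computation is that for points $x_\alpha^0,x_\alpha^1\in K$ the functional $\varphi_\alpha=\delta_{x_\alpha^1}-\delta_{x_\alpha^0}$ lies in $C(K)^\ast$ (point evaluations are bounded on $C(K)$ since $K$ is compact) and satisfies, for every $f_\beta\in C(K)$,
\[
\varphi_\alpha(f_\beta)=f_\beta(x_\alpha^1)-f_\beta(x_\alpha^0).
\]
Thus the pairing $\varphi_\alpha(f_\beta)$ records exactly the difference of the values of $f_\beta$ at the two points of the $\alpha$-th pair, and the whole proof is the observation that the clauses of Definition \ref{nicetop} translate through this identity into the clauses of Definitions \ref{biorthognal} and \ref{nice}.

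First I would dispose of the forward direction for the biorthogonal case. If $\{(x_\alpha^0,x_\alpha^1):\alpha<\alpha^\ast\}$ is a bidiscrete system witnessed by $\{f_\alpha\}$, then the first clause of Definition \ref{nicetop} gives $f_\alpha(x_\alpha^1)-f_\alpha(x_\alpha^0)=1-0=1$, i.e. $\varphi_\alpha(f_\alpha)=1$, while for $\beta\neq\alpha$ the second clause gives $f_\beta(x_\alpha^1)=f_\beta(x_\alpha^0)$, i.e. $\varphi_\alpha(f_\beta)=0$; hence $\{(f_\alpha,\varphi_\alpha)\}$ is a nice biorthogonal system.

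The reverse direction is where the one subtlety appears: a biorthogonal system only constrains the difference $\varphi_\alpha(f_\alpha)=1$, whereas a bidiscrete system pins down both individual values $f_\alpha(x_\alpha^0)=0$ and $f_\alpha(x_\alpha^1)=1$. To bridge this I would normalize. Given a nice biorthogonal system with functions $f_\alpha$, replace each $f_\alpha$ by $g_\alpha=f_\alpha-f_\alpha(x_\alpha^0)\cdot 1_K$, where $1_K$ is the constant function $1$, available in $C(K)$ by compactness. Since each $\varphi_\gamma$ annihilates constants, $\varphi_\gamma(1_K)=\delta_{x_\gamma^1}(1_K)-\delta_{x_\gamma^0}(1_K)=0$, the substitution leaves every pairing $\varphi_\gamma(g_\alpha)=\varphi_\gamma(f_\alpha)$ unchanged; but now $g_\alpha(x_\alpha^0)=0$ and $g_\alpha(x_\alpha^1)=\varphi_\alpha(f_\alpha)=1$, and the vanishing of $\varphi_\alpha(f_\beta)$ for $\beta\neq\alpha$ reads as $g_\beta(x_\alpha^0)=g_\beta(x_\alpha^1)$. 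Thus the $g_\alpha$ witness that the pairs form a bidiscrete system.

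The semi-biorthogonal case runs along the same lines, translating Definition \ref{biorthognal} clause by clause through the displayed identity, with two points requiring care. The order conditions correspond directly once one keeps straight which of the two indices is the function index and which is the point index. The remaining and more delicate point is the inequality clause $\varphi_\alpha(f_\beta)\ge 0$, which must be reconciled with the implication clause of Definition \ref{nicetop}(2); here I would use the freedom, noted after Definition \ref{biorthognal}, to take each $f_\alpha$ with range in $[0,1]$, under which the sign condition $f_\beta(x_\alpha^1)\ge f_\beta(x_\alpha^0)$ matches the implication $f_\beta(x_\alpha^0)=1\implies f_\beta(x_\alpha^1)=1$. The normalization of the previous paragraph, which preserves all pairings and hence the sign condition, again supplies the exact diagonal values $0$ and $1$ in the reverse direction. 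I expect this reconciliation of the inequality with the implication, together with the bookkeeping of the order relation between the indices, to be the only genuine obstacle; everything else is a mechanical application of the displayed identity.
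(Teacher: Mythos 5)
Your argument for the biorthogonal case is exactly the paper's proof: the forward direction is the identity $\varphi_\alpha(f_\beta)=f_\beta(x_\alpha^1)-f_\beta(x_\alpha^0)$, and the reverse direction is the normalization $g_\alpha=f_\alpha-f_\alpha(x_\alpha^0)$, which the paper uses verbatim (and which, as you note, changes no pairing because each $\varphi_\gamma$ annihilates constants). That part is correct and complete.

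For the semi case the paper says only that ``the proof is the same,'' so you go further than the paper by flagging the mismatch between the clause $\varphi_\alpha(f_\beta)\ge 0$ and the implication clause of Definition \ref{nicetop}(2). But your proposed resolution does not close it. Restricting the range to $[0,1]$ gives one direction only: if $f_\beta(x_\alpha^1)\ge f_\beta(x_\alpha^0)$ and $f_\beta(x_\alpha^0)=1$, then $f_\beta(x_\alpha^1)=1$, so the sign condition implies the implication. The converse fails: $f_\beta(x_\alpha^0)=1/2$ and $f_\beta(x_\alpha^1)=0$ satisfies the implication vacuously while $\varphi_\alpha(f_\beta)=-1/2<0$. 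The two clauses coincide only for $\{0,1\}$-valued witnesses (the ``very nice'' case), so the direction from semi-bidiscrete sequences to nice semi-biorthogonal sequences is not established by your normalization. Moreover, the order bookkeeping you wave at is not innocuous: translating Definition \ref{biorthognal} through your displayed identity, the clause $\varphi_\alpha(f_\beta)=0$ for $\beta<\alpha$ says each $f_\beta$ is constant on the pairs of \emph{larger} index, whereas Definition \ref{nicetop}(2) asks each $f_\alpha$ to be constant on the pairs of \emph{smaller} index; the two definitions attach their clauses to opposite relative orders of the function index and the pair index, and since a well-order cannot be reversed this is not a harmless relabelling. These are arguably defects of the paper's own definitions rather than of your argument, but a careful write-up of the ``analogous statement'' has to confront both points rather than assert, as you (and the paper) do, that the translation is mechanical.
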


\begin{Proof} We only prove the statement for nice biorthogonal systems, the proof for the nice
semi-biorthogonal sequences is the same. If we are given a system
exemplifying (1), then
$\delta_{x^1_\alpha}(f_\beta)-\delta_{x^0_\alpha}(f_\beta)=
f_\beta(x^1_\alpha)-f_\beta(x^0_\alpha)$ has the values as
required. On the other hand, if we are given a nice biorthogonal
system of pairs $\{(f_\alpha,
\delta_{x^1_\alpha}-\delta_{x^0_\alpha}) : \alpha < \alpha^*\}$
for $X$, define for $\alpha<\alpha^*$ the function $g_\alpha \in
C(K)$ by $g_\alpha(x)=f_\alpha(x)-f_\alpha(x^0_\alpha)$. Then $\{
(x_\alpha^0, x_\alpha^1):\,\alpha<\alpha^\ast\}$ satisfies (1), as
witnessed by $\{ g_\alpha:\,\alpha<\alpha^\ast\}$.
$\eop_{\ref{nice=bidiscrete}}$
\end{Proof}

In the case of a 0-dimensional space $K$ we are often able to make
a further simplification by requiring that the functions
$f_\alpha$ exemplifying the bidiscreteness of $(x_\alpha^0,
x_\alpha^1)$ take only the values 0 and 1. This is clearly
equivalent to asking for the existence of a family $\{
H_\alpha:\,\alpha<\alpha^\ast\}$ of clopen sets in $K$ such that
each $H_\alpha$ separates $x^0_\alpha$ and $x^1_\alpha$ but not
$x^0_\beta$ and $x^1_\beta$ for $\beta\neq\alpha$. We call such
bidiscrete systems  {\em very nice}. We can analogously define a
{\em very nice} semi-bidiscrete sequence, where the requirements
on the clopen sets become $x^l_\alpha\in H_\alpha\iff l=1$,
$\beta<\alpha\implies [x^0_\beta\in H_\alpha\iff x^{1}_\beta\in
H_\alpha]$ and $[\beta>\alpha\, \wedge \, x^0_\beta\in H_\alpha]
\implies x^1_\beta\in H_\alpha$.

%We shall use the expression {\em very nice (semi)-bidiscrete sequence in $K$} to denote a
%(semi)-bidiscrete sequence whose (semi)-bidiscretness is witnessed by
%clopen sets as in the
%above discussion
%, and we shall identify such very nice (semi)-bidiscrete sequences with the corresponding
%sequence  $\langle (x^0_\alpha, x^1_\alpha, H_\alpha):\,\alpha<\alpha^\ast\rangle$. The corresponding
We shall use the expression {\em very nice (semi-)biorthogonal
system (sequence)} in $C(K)$ to refer to a nice
(semi-)biorthogonal system (sequence) obtained as in the proof of
Claim \ref{nice=bidiscrete} from a very nice (semi-)bidiscrete
system (sequence) in $K$.

\begin{Example}
\label{splitinterval} (1) Let $K$ be the split interval (or double
arrow) space, namely the ordered space $K=[0,1]\times\{0,1\}$,
ordered lexicographically.  Then $$\{\big((x,0), (x,1)\big) : x
\in [0,1]\}$$ forms a very nice bidiscrete system in $K$. This is
exemplified by the two-valued continuous functions $\{f_x : x \in
[0,1]\}$ defined by $f_x(r)=0$ if $r\le (x, 0)$ and $f_x(r)=1$
otherwise.

(2) Suppose that $\kappa$ is an infinite cardinal and
$K=2^\kappa$. For $l\in \{0,1\}$ and $\alpha<\kappa$ we define
$x^l_\alpha\in K$ by letting $x^l_\alpha(\beta)=1$ if
$\beta<\alpha$, $\,x^l_\alpha(\beta)=0$ if $\beta> \alpha$, and
$x^l_\alpha(\alpha)=l$. The clopen sets $H_\alpha=\{f\in
K:\,f(\alpha)=1\}$ show that the pairs $\{(x^0_\alpha,x^1_\alpha)
: {\alpha<\kappa}\}$ form a very nice bidiscrete system in the
Cantor cube $K = 2^\kappa$.
\end{Example}

In \cite{StevoMM}, Theorem 10, it is proved under
$MA_{\omega_1}\,$ that every Banach space of the kind $X=C(K)$ for
a nonmetrizable compact $K$ admits an uncountable nice
biorthogonal system. Moreover, at the end of the proof it is
stated that for a 0-dimensional $K$ this biorthogonal system can
even be assumed to be very nice (in our terminology).

As nice semi-biorthogonal sequences may be defined using only $K$
and $X=C(K)$ and do not involve the dual $X^\ast$, in
constructions where an enumerative tool such as $CH$ is used it is
easier to control nice systems than the general ones. In our CH
construction below of a closed subspace $K$ of $2^{\omega_1}$ we
would at least like to destroy all uncountable nice
semi-biorthogonal sequences by controlling semi-bidiscrete
sequences in $K$. We are only able to do this for semi-bidiscrete
sequences which are not already determined by the first
$\omega$-coordinates, in the sense of the following Definition
\ref{supernice} : In our space $K$ any uncountable nice
semi-biorthogonal sequence must be $\omega$-determined.

\begin{Definition}\label{supernice}
%For sequences $x\nsubseteq y, y \nsubseteq x \in \bigcup_{\alpha<\omega_1}2^{\omega_1}$
%we let $\Delta(x,y)=\min\{i:\,x(i)\neq y(i)\}$.
A family $\{(x^0_\alpha, x^1_\alpha):\,\alpha<\alpha^*\} \subseteq
2^{\omega_1} \times 2^{\omega_1}$ is said to be {\em
$\omega$-determined} if
\[
(\forall s\in
2^\omega)\,\{\alpha:\,x^0_\alpha\rest\omega=x^1_\alpha\rest\omega=s\}\mbox{
is countable}.
\]
For $K \subseteq 2^{\omega_1}$ we define an {\em $\omega$-determined
semi-biorthogonal sequence in $C(K)$} to be any nice
semi-biorthogonal sequence $\langle
(f_\alpha,\delta_{x^1_\alpha}-\delta_{x^0_\alpha}):\,\alpha<\alpha^\ast\rangle$
for which the associated semi-bidiscrete sequence $\langle
(x^0_\alpha, x^1_\alpha):\alpha<\alpha^\ast \rangle$ forms an
$\omega$-determined family.
\end{Definition}

 \section{The $CH$ construction}\label{semi}

\begin{Theorem}\label{CH} Under $CH$, there is a compact space $K \subseteq 2^{\omega_1}$ with the following properties:
\begin{itemize}
\item $K$ is not metrizable, but is a 2-to-1 continuous preimage of a metric space,
\item $K$ is HS and HL,
\item\label{treci} every Radon measure on $K$ is separable,
\item $K$ has no isolated points,
\item $K$ is not Rosenthal compact,
\item any uncountable nice semi-biorthogonal sequence in $C(K)$ is $\omega$-determined.
\end{itemize}
\end{Theorem}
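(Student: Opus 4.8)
My plan is to build $K$ by a recursion of length $\omega_1$, adding one coordinate at a time, using $CH$ to schedule in order type $\omega_1$ every object that could violate one of the six conclusions. Formally I would realize $K$ as the inverse limit of a continuous increasing chain $\langle K_\alpha:\omega\le\alpha<\omega_1\rangle$ of compact metric spaces, with $K_\alpha=\pi_\alpha[K]\subseteq 2^\alpha$ the projection onto the first $\alpha$ coordinates; the bonding map $K_{\alpha+1}\to K_\alpha$ is a projection whose fibres have size one or two, so deciding coordinate $\alpha$ amounts to choosing which points of $K_\alpha$ to \emph{split}. Taking inverse limits makes $K$ compact and zero-dimensional for free. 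Splitting uncountably many points forces the weight of $K$ up to $\aleph_1$, hence $K$ is non-metrizable, while insisting that globally only a controlled two-to-one collapse (the surviving twin pairs) remain will give that $K$ is a two-to-one continuous preimage of a metric space; never splitting so as to isolate a point secures that $K$ has no isolated points.

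The hereditary properties I would get from the usual reflection for subspaces of $2^{\omega_1}$: an uncountable right- or left-separated subspace is witnessed by points together with basic clopen separators of countable support, so a pressing-down and $\Delta$-system argument reflects any such configuration to a countable stage $\alpha$, where I can refuse to extend it; this yields HL and HS simultaneously. Separability of every Radon measure and the failure of Rosenthal compactness I would handle as further scheduled tasks --- the former by preventing any measure from seeing an uncountable independent family of the chosen clopen sets, the latter by deliberately building in one uncountable pattern that no Baire-class-one representation can accommodate --- each being a local requirement that a single stage can meet.

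The decisive clause is the last one, and I would reduce it, via Observation \ref{nice=bidiscrete}, to a statement purely about the fibres $F_s:=\{x\in K:x\rest\omega=s\}$ for $s\in 2^\omega$: it suffices to guarantee that \emph{no fibre $F_s$ carries an uncountable semi-bidiscrete sequence}. Indeed, if some uncountable nice semi-biorthogonal sequence in $C(K)$ failed to be $\omega$-determined, then by Definition \ref{supernice} there is an $s$ for which uncountably many of the associated pairs $(x^0_\alpha,x^1_\alpha)$ lie, in both coordinates, in $F_s$; since the witnessing functions are continuous on all of $K$, restricting them to the closed set $F_s$ leaves the defining equalities and inequalities (which only read values at the points $x^l_\alpha\in F_s$) intact, producing an uncountable semi-bidiscrete sequence inside $F_s$, against the demand. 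Thus the whole last clause becomes a single recursive obligation: as coordinates beyond $\omega$ are added, keep every $F_s$ so simple that it admits no uncountable semi-bidiscrete sequence --- most cleanly, keep each $F_s$ metrizable, whence $C(F_s)$ is separable and the theorem of Rolewicz \cite{Ro} together with \cite{Bv} forbids even an uncountable semi-biorthogonal sequence there --- or at least keep it semi-bidiscretely trivial.

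The main difficulty, which I expect to be the real obstacle, is that this fibrewise smallness must coexist with genuine global complexity. I cannot make $K$ itself free of uncountable semi-bidiscrete sequences, since that would settle the open Rolewicz problem, so the Rolewicz-type triviality has to be confined to each single fibre while the splits, taken together across all $s$, still push the weight to $\aleph_1$ and defeat Rosenthalness. The tension is sharpest against hereditary Lindel\"ofness: to keep $K$ hereditarily Lindel\"of the splits at different values of $s$ must be laid down \emph{coherently} (otherwise independent splits reassemble into an uncountable discrete subspace), yet making the splits coherent risks leaving some single fibre $F_s$ crossed by uncountably many of the splitting coordinates, which is exactly what would build an uncountable semi-bidiscrete sequence inside that $F_s$. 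Threading between these --- scheduling, at each of the $\omega_1$ stages, a split that at once advances one bookkeeping task, respects the two-to-one collapse, keeps the current fibre metrizable, and creates no left- or right-separated pattern --- is where essentially all the work lies, and its feasibility hinges on a careful coding, fixed at the outset, of which points are eligible to be split at which stage.
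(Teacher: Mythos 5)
Your skeleton is the right one and matches the paper's in outline: realize $K$ as the inverse limit of compact metric $K_\alpha\subseteq 2^\alpha$ with bonding maps whose fibres have size at most two, use $CH$ to schedule tasks, and reduce the last clause to the fibres $F_s=\{x\in K: x\rest\omega=s\}$. That fibrewise reduction is correct, and your Rolewicz/Borwein--Vanderwerff argument for metrizable fibres would close it; in fact the paper makes each $F_s$ have at most two points by the single genericity condition $s_\beta\rest\alpha\neq s_\alpha$ for $\alpha<\beta$ (requirement {\bf R8}), which yields the 2-to-1 map and the $\omega$-determinedness simultaneously, so the ``tension'' you identify at the end is not where the difficulty lies. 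The genuine gaps are in the two clauses you treat as routine bookkeeping.

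(i) HS and HL. Your mechanism --- reflect an uncountable left- or right-separated configuration to a countable stage by pressing down and a $\Delta$-system argument, then ``refuse to extend it'' --- cannot be scheduled. There are $2^{\aleph_1}$ such potential configurations; each consists of points of full support in $2^{\omega_1}$, so none of them is visible at any countable stage, and there is no single stage at which a given one could be killed, nor a list of order type $\omega_1$ to process. The missing idea is the one the paper takes from \cite{DzK}: carry a coherent sequence of measures $\mu_\alpha$ ({\bf R5}) and impose {\bf R6}--{\bf R7} so that for every closed $H\subseteq K$ the projection $\pioa$ is irreducible on $(\pioa)^{-1}(\pioa(H))$ for some $\alpha<\omega_1$, which by Lemma \ref{Lemma4.2} is equivalent to HS+HL. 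This works because $\mu_\gamma(H_\gamma)$ stabilizes, so $H$ is captured by a countably-coded pair $(\gamma, H_\gamma)$ with $H_\gamma$ Borel in $2^\gamma$, and under $CH$ there are only $\aleph_1$ such pairs to enumerate (Claim \ref{induction} and Lemma \ref{repeat}). Replacing uncountable configurations by countably-coded closed sets via measure stabilization is the heart of the construction and is absent from your proposal. (ii) Non-Rosenthalness. ``Deliberately building in one uncountable pattern that no Baire-class-one representation can accommodate'' is not a local requirement that a single stage can meet; there is no such finitary obstruction to being Rosenthal compact. The paper instead invokes Todor\v cevi\'c's dichotomy from \cite{StevoRos} --- a non-metric Rosenthal compactum contains an uncountable discrete subspace or a copy of the split interval --- and, since HS+HL already excludes the former, rules out the split interval by showing that every $\mu$-null subset of $K$ is second countable (Claim \ref{Rosenthal}), again using the measure machinery. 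Without (i) and (ii) supplied, the proposal does not yet prove the second, third (in its stated vague form), or fifth clauses.
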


\begin{proof} We divide the proof into two parts. In the first we give various requirements on the construction,
and show that if these requirements are satisfied the space meeting the claim of the theorem can be constructed. In the second part we show that
these requirements can be met.

\subsubsection{The requirements}
Our space will be a closed subspace of $2^{\omega_1}$. Every such space  can be viewed as the limit of an inverse system of spaces, as we now explain.

\begin{Definition}
For $\alpha\leq\beta\le\omega_1$, define $\piba : 2^{\beta}\rightarrow
2^{\alpha}$ by $\piba (f) = f\rest\alpha $.
\end{Definition}

Suppose that $K$ is a closed subspace of $2^{\omega_1}$, then
for $\alpha \le \omega_1$ we let $K_\alpha=\pi^{\omega_1}_\alpha(K)$.  So, if $\alpha \le \beta$ then
$K_\alpha$ is the $\pi^\beta_\alpha$-projection  of $K_\beta$. For $\alpha<\omega_1$ let
\[
A_\alpha=\pi^{\alpha+1}_\alpha(\{x\in K_{\alpha+1}:x(\alpha)=0\}), B_\alpha=\pi^{\alpha+1}_\alpha(\{x\in K_{\alpha+1}:x(\alpha)=1\}).
\]
The following statements are then true:

\begin{description}

\item{\bf R1}.\label{1.1} $K_\alpha$ is a closed subset of $2^\alpha$, and
$\pi_\alpha^\beta(K_\beta) = K_\alpha$ whenever
$\alpha \le \beta \le \omega_1$.

\item{{\bf R2}.}\label{1.2} For $\alpha < \omega_1$,
$A_\alpha$ and $B_\alpha$ are closed in $K_\alpha$,
$A_\alpha \cup B_\alpha = K_\alpha$, and
$K_{\alpha + 1} = A_\alpha \times \{0\} \cup B_\alpha \times \{1\}$.
%(Here, we identify $2^{\alpha + 1}$ with $2^\alpha \times \{0,1\}$.)

\end{description}

Now $K$ can be viewed as the limit of the inverse system $\mathcal
K=\{K_\alpha:\,\alpha<\omega_1, \piba\rest K_\beta:\,\alpha\le
\beta<\omega_1\}$. Therefore to construct the space $K$ it is
sufficient to specify the system $\KK$, and as long as the
requirements {\bf R1} and {\bf R2} are satisfied, the resulting
space $K$ will be a compact subspace of $2^{\omega_1}$. This will
be our approach to constructing $K$, that is we define $K_\alpha$
by induction on $\alpha$ to satisfy various requirements that we
list as {\bf Rx}.

The property HS+HL will be guaranteed by a use of irreducible
maps, as in \cite{DzK}. Recall that for spaces $X,Y$, a map
$f:\,X\into Y$ is called {\em irreducible} on $A\subseteq X$ iff
for any proper closed subspace $F$ of $A$ we have that $f(F)$ is a
proper subset of $f(A)$. We shall have a special requirement to
let us deal with HS+HL, but we can already quote Lemma 4.2 from
\cite{DzK}, which will be used in the proof. It applies to any
space $K$ of the above form.

\begin{Lemma}\label{Lemma4.2} Assume that $K$ and $K_\alpha$ satisfy {\bf R1} and {\bf R2} above.
Then $K$ is HL+HS iff for all closed $H \subseteq K$,
there is an $\alpha <\omega_1$ for which
$\pioa$ is irreducible on $(\pioa )^{-1} (\pioa (H))$.
\end{Lemma}

In addition to the requirements given above
we add the following basic requirement {\bf R3} which assures that $K$ has no isolated points.

\begin{description}

\item{{\bf R3}.}\label{1.3} For $n < \omega$,
$K_n = A_n = B_n = 2^n$.  For $\alpha \ge \omega$, $A_\alpha$ and
$B_\alpha$ have no isolated points.

\end{description}

Note that the requirement {\bf R3} implies that for each $\alpha
\ge \omega$, $K_\alpha$ has no isolated points; so it is easy to
see that the requirements guarantee that $K$ is a compact subspace
of $2^{\omega_1}$ and that it has no isolated points. Further,
$K_\omega = 2^\omega$ by {\bf R1} and {\bf R3}. The space $K$ is
called {\em simplistic} if for all $\alpha$ large enough
$A_\alpha\cap B_\alpha$ is a singleton. For us `large enough' will
mean `infinite', i.e. during the construction we shall obey the
following:

\begin{description}
\item{{\bf R4}.}\label{1.4} For all $\alpha\in [\omega,\omega_1)$ we
have $A_\alpha\cap B_\alpha=\{s_\alpha\}$ for some $s_\alpha\in K_\alpha$.
\end{description}

By {\bf R4} we can make the following observation which will be useful later:

\begin{Observation}\label{delta} Suppose that $x\in K_\alpha, y\in K_\beta$ for some $\omega\le\alpha\le\beta$ and $x\nsubseteq y$,  $y\nsubseteq x$ with $\Delta(x,y)\ge\omega$. Then $x\rest\Delta(x,y)=y\rest\Delta(x,y)=s_{\Delta(x,y)}$.
\end{Observation}

As usual, we used here the notation $\Delta(x,y) = \min \{\alpha :
x(\alpha) \ne y(\alpha)\}$.

Requirement {\bf R4} implies that $K$ is not 2nd countable, hence not
metrizable.
The following is folklore in the subject, but one can also see \cite{Piotr} for a detailed explanation
and stronger theorems:

\begin{Fact} Every Radon measure on a simplistic space is separable.
\end{Fact}

Now we come back to the property HS+HL. To assure this we shall
construct an auxiliary Radon measure $\mu$ on $K$. This measure
will be used, similarly as in the proof from Section \S 4 in
\cite{DzK}, to assure that for every closed subset $H$ of $K$ we
have $H = (\pioa )^{-1} (\pioa (H))$ for some countable coordinate
$\alpha$. In fact, what we need for our construction is not the
measure $\mu$ itself but a sequence $\langle
\mu_\alpha:\,\alpha<\omega_1 \rangle$ where each $\mu_\alpha$ is a
Borel measure on $K_\alpha$ and these measures satisfy that for
each $\alpha\le\beta<\omega_1$ and Borel set $B\subseteq K_\beta$,
we have $\mu_\beta(B)=\mu_\alpha(\piba(B))$. As a side remark
the
sequence $\langle \mu_\alpha:\,\alpha<\omega_1 \rangle$ will
uniquely determine a Radon measure $\mu = \mu_{\omega_1}$ on $K$.
To uniquely determine each Borel (=Baire)
measure $\mu_\alpha$ it is sufficient to decide its values on the
clopen subsets of $K_\alpha$. We formulate a requirement to
encapsulate this discussion:

\begin{description}

\item{{\bf R5}.} For
$\alpha\le\omega_1$, $\mu_\alpha$ is a finitely additive probability
measure on the clopen subsets of $K_\alpha$, and
$\mu_\alpha = \mu_\beta (\piba )^{-1}$ whenever
$\omega\le\alpha \le \beta \le \omega_1$. For $\alpha \le \omega$,
$\mu_\alpha$ is the usual product measure on the clopen subsets of $K_\alpha=2^\alpha$.
\end{description}

Let $\widehat{\mu}_\alpha$ be the Borel measure on $K_\alpha$
generated by $\mu_\alpha$. It is easy to verify that {\bf R1}-{\bf
R5} imply that for $\alpha \le \omega$, $\widehat {\mu}_\alpha$ is
the usual product measure on $K_\alpha=2^\alpha$, and that for any
$\alpha$, $\widehat {\mu}_\alpha$ gives each non-empty clopen set
positive measure and measure 0 to each point in $K_\alpha$. We
shall abuse notation and use $\mu_\alpha$ for both
$\widehat{\mu}_\alpha$ and its restriction to the clopen sets.
Note that by the usual Cantor tree argument these properties
assure that in every set of positive measure there is an
uncountable set of measure 0; this observation will be useful
later on.

The following requirements will help us both to obtain HS+HL and
to assure that $K$ is not Rosenthal compact. To formulate these
requirements we use $CH$ to enumerate the set of pairs $\{(\gamma,
J):\,\gamma<\omega_1\,,\, J \subseteq 2^\gamma \mbox{ is Borel}
\}$ as $\{(\delta_\alpha, J_\alpha):\,\omega\le \alpha
<\omega_1\}$ so that $\delta_\alpha \le\alpha$ for all $\alpha$
and each pair appears unboundedly often.

Suppose that $\omega \le \alpha < \omega_1$ and $K_\alpha$ and $\mu_\alpha$ are defined.
We define the following subsets of $K_\alpha$:

{\parindent= 40pt

$C_\alpha = (\pi^\alpha_{\delta_\alpha} )^{-1} (J_\alpha )$, if $J_\alpha\subseteq
K_{\delta_\alpha}$; $C_\alpha = \emptyset$ otherwise.

$L_\alpha = C_\alpha$ if $C_\alpha$ is closed;
$L_\alpha = K_\alpha$ otherwise.

$Q_\alpha = L_\alpha \setminus\bigcup\{ O:
O \hbox{\ is open and \ }  \mu_\alpha (L_\alpha \cap O) = 0\}$

$N_\alpha =(L_\alpha\setminus Q_\alpha)\cup C_\alpha$, if
$\mu_\alpha(C_\alpha ) =0$;
$N_\alpha =(L_\alpha\setminus Q_\alpha)$ otherwise.

\medskip
}

Let us note that $L_\alpha$ is a closed subset of $K_\alpha$ and
that $Q_\alpha\subseteq L_\alpha$ is also closed and satisfies
$\mu_\alpha(Q_\alpha)=\mu_\alpha(L_\alpha)$, and hence
$\mu_\alpha(N_\alpha)=0$. Also observe that $Q_\alpha$ has no
isolated points, as points have $\mu_\alpha$ measure 0.
%Namely, otherwise we would have that
%for $V=\bigcup\{ O:
%O \hbox{\ is open\ } \wedge \, \mu_\alpha (L_\alpha \cap O) = 0\}$, $\mu_\alpha(V\cap L_\alpha)>0$.
%Since $\mu_\alpha$ is Radon, there would have to be a closed, so compact $C\subseteq V\cap L_\alpha$
%with $\mu_\alpha(C)>0$. By definition of $V$ and compactness, such a $C$ would be covered by finitely many %sets of the
%form $O\cap L_\alpha$ where $O$ is open and $\mu_\alpha (L_\alpha \cap O) = 0$- a contradiction.
%Therefore $\mu_\alpha(Q_\alpha)=\mu_\alpha(L_\alpha)$ .

We now recall from \cite{DzK} what is meant by $A$ and $B$ being
{\em complementary regular closed subsets} of a space $X$: this
means that $A$ and $B$ are both regular closed with $A\cup B=X$,
while $A\cap B$ is nowhere dense in $X$. Finally, we state the
following requirements:

\begin{description}
\item{\bf R6}.\label{4.1}
For any $\beta\geq\alpha\geq\omega$, $s_\beta\notin
(\piba)^{-1}( N_\alpha)$;
\item{\bf R7}.\label{2.3} For any $\beta\geq\alpha\geq\omega$, $A_\beta\cap (\piba)^{-1}(Q_\alpha)$ and
$B_\beta\cap (\piba)^{-1} (Q_\alpha)$ are complementary regular closed
subsets of $(\piba )^{-1} (Q_\alpha)$.
\end{description}

The following claim and lemma explain our use of irreducible maps,
and the use of measure as a tool to achieve the HS+HL properties
of the space. The proof is basically the same as in \cite{DzK} but
we give it here since it explains the main point and also to show
how our situation actually simplifies the proof from  \cite{DzK}.
For any $\alpha$, we use the notation $[s]$ for a finite partial
function $s$ from $\alpha$ to 2 to denote the basic clopen set
$\{f\in 2^{\alpha}:\,s\subseteq f\}$, or its relativization to a
subspace of $2^\alpha$, as it is clear from the
context.~\footnote{The notation also does not specify $\alpha$ but
again following the tradition, we shall rely on $\alpha$ being
clear from the context.}

\begin{Claim}\label{induction} Assume the requirements {\bf R1}-{\bf R5} and {\bf R7}. Then
for each $\beta\in [\alpha,\omega_1]$ the projection $\piba$ is irreducible on
$(\piba )^{-1}  (Q_\alpha )$.
\end{Claim}

\begin{Proof of the Claim}
We use induction on $\beta\ge\alpha$.

The step $\beta=\alpha$ is clear. Assume that we know that the projection $\piba$ is irreducible on
$(\piba )^{-1}  (Q_\alpha )$ and let us prove that $\pi^{\beta+1}_\alpha$ is irreducible on
$(\pi^{\beta+1}_\alpha )^{-1}  (Q_\alpha )$.  Suppose that $F$ is a proper closed subset of $(\pi^{\beta+1}_\alpha )^{-1}
(Q_\alpha )$ satisfying $\pi^{\beta+1}_\alpha(F)=Q_\alpha$. Then by the inductive assumption
$\pi^{\beta+1}_\beta(F)=(\piba )^{-1}  (Q_\alpha )$.  Let $x\in (\pi^{\beta+1}_\alpha )^{-1}
(Q_\alpha )\setminus F$, so we must have that $x\rest\beta=s_\beta$. Assume $x(\beta)=0$, the case
$x(\beta)=1$ is symmetric. Because $F$ is closed, we can find a basic clopen set $[t]$ in $K_{\beta+1}$
containing $x$ such that $[t]\cap F=\emptyset$. Let $s=t\rest\beta$.

Therefore $s_\beta\in [s]$ holds in $K_\beta$, and by {\bf R7} we can find $y\in {\rm int}(A_\beta
\cap (\piba )^{-1}  (Q_\alpha ))\cap [s]$. Using the inductive assumption we conclude
$y\in {\rm int}(A_\beta\cap (\pi^{\beta+1}_\beta ) (F ))\cap [s]$, so there is a basic clopen
set $[v]\subseteq [s]$ in $K_\beta$ such that $y\in [v]$ and $[v]\subseteq A_\beta\cap (\pi^{\beta+1}_\beta ) (F )$.
But then $[v]$ viewed as a clopen set in $K_{\beta+1}$ satisfies $[v]\subseteq [t]$ and yet
$[v]\cap F\neq \emptyset$.
%because $s_\beta\not in \pi_\beta(F_$, as otherwise $x\in F$, so anything in $[v]$ continues by 0

The limit case of the induction is easy by the definition of inverse limits.

$\eop_{\ref{induction}}$
\end{Proof of the Claim}

\begin{Lemma}\label{repeat} Assume the requirements {\bf R1}-{\bf R7}
and let $H$ be a closed subset of $K$.  Then
there is an $\alpha <\omega_1$ such that
$\pioa$ is irreducible on $(\pioa )^{-1} (\pioa (H))$.
\end{Lemma}

\begin{Proof of the Lemma}
For each $\gamma < \omega_1$, let $H_\gamma = \pi^{\omega_1}_\gamma (H)$.
Then the $\mu_\gamma (H_\gamma )$ form a non-increasing sequence
of real numbers, so we may fix a $\gamma <\omega_1$
such that for all $\alpha\geq\gamma$, $\mu_\alpha(H_\alpha)=
\mu_\gamma(H_\gamma)$.
Next fix an $\alpha \geq\gamma$ such that $\delta_\alpha = \gamma$ and
$J_\alpha = H_\gamma$.
Then $L_\alpha = C_\alpha = (\pi^\alpha_\gamma)^{-1} (H_\gamma)$.
Hence $H_\alpha$ is a closed
subset of $L_\alpha$ with the same measure as $L_\alpha$,
so $Q_\alpha \subseteq H_\alpha \subseteq L_\alpha$, by the definition of $Q_\alpha$. Recall that by Claim \ref{induction}
we have that $\pioa$ is irreducible on $(\pioa)^{-1}(Q_\alpha)$.

Now we claim that
$\pioa$ is 1-1 on $(\pioa )^{-1}(H_\alpha\setminus Q_\alpha)$. Otherwise, there would be $x\neq y\in
(\pioa )^{-1}(H_\alpha\setminus Q_\alpha)$ with $x\rest\alpha=y\rest\alpha$. Therefore for some
$\beta\ge\alpha$ we have $x\rest\beta=y\rest \beta=s_\beta$, as otherwise $x=(\pioa )^{-1}(\{x\rest\alpha\})$. In
particular  $s_\beta\in (\piba)^{-1}(H_\alpha)\subseteq  (\piba)^{-1}(L_\alpha)$. On the other hand, if
$s_\beta\in (\piba)^{-1}(Q_\alpha)$ then $\{x,y\}\in (\pioa )^{-1}(Q_\alpha)$- a contradiction- so
$s_\beta\notin (\piba)^{-1}(Q_\alpha)$.
This means
$s_\beta\in (\piba)^{-1}(N_\alpha)$, in contradiction with {\bf R6}.

Thus, $\pioa$ must be irreducible on $(\pioa )^{-1} (H_\alpha )$ as well, and the Lemma is proved.
$\eop_{\ref{repeat}}$
\end{Proof of the Lemma}

Now we comment on how to assure that $K$ is not Rosenthal compact.
A remarkable theorem of Todor\v cevi\'c from \cite{StevoRos}
states that every non-metric Rosenthal compactum contains either
an uncountable discrete subspace or a homeomorphic copy of the
split interval. As our $K$, being HS+HL, cannot have an
uncountable discrete subspace, it will suffice to show that it
does not contain a homeomorphic copy of the split interval.

\begin{Claim}\label{Rosenthal} Suppose that the requirements {\bf R1}-{\bf R7} are met. Then
\begin{description}
\item{(1)} all $\mu$-measure 0 sets in $K$ are second countable and
\item{(2)} $K$ does not contain a homeomorphic copy of the split interval.
\end{description}
\end{Claim}

\begin{Proof of the Claim}
(1) Suppose that $M$ is a $\mu$-measure 0 Borel set in $K$ and let
$N=\pi_\omega^{\omega_1}(M)$, hence $N$ is of measure 0 in
$2^\omega$. Let $\alpha\in[\omega,\omega_1)$ be such that
$\delta_\alpha= \omega$ and $J_\alpha=N$. Then
$C_\alpha=(\pi_\omega^\alpha)^{-1}(N)$ and hence
$\mu_\alpha(C_\alpha)=0$ and so $C_\alpha\subseteq N_\alpha$.
Requirement {\bf R6} implies that for $\beta\ge\alpha$,
$(\pi_\beta^{\omega_1})^{-1}(s_\beta)\cap M= \emptyset$, so the
topology on $M$ is generated by the basic clopen sets of the form
$[s]$ for $\dom(s)\subseteq\alpha$. So $M$ is 2nd countable.

(2) Suppose that $H\subseteq K$ is homeomorphic to the split interval. Therefore $H$ is compact
and therefore closed in $K$. In particular $\mu(H)$ is defined.

If $\mu(H)=0$ then by (1), $H$ is 2nd countable, a contradiction.
If $\mu(H)>0$ then there is an uncountable set $N\subseteq H$ with
$\mu(N)=0$. Then $N$ is uncountable and 2nd countable,
contradicting the fact that all 2nd countable subspaces of the
split interval are countable. $\eop_{\ref{Rosenthal}}$
\end{Proof of the Claim}

Now we comment on how we assure that any uncountable nice semi-biorthogonal
system in $C(K)$ is $\omega$-determined, i.e. any uncountable semi-bidiscrete sequence in $K$
forms an $\omega$-determined family of pairs of points. For this we make one further
requirement:

\begin{description}
\item{\bf R8}. If $\alpha\,,\beta\in[\omega,\omega_1)$ with $\alpha < \beta$ then $s_\beta\rest\alpha\neq s_\alpha$.
\end{description}

%%%

\begin{claim}
\label{nosupernice} Requirements {\bf R1}-{\bf R8} guarantee that
any uncountable semi-bidiscrete sequence in $K$ is
$\omega$-determined.\end{claim}
%%%

\begin{Proof of the Claim}
Suppose that $\langle
(x^0_\alpha,x^1_\alpha):\,\alpha<\omega_1\rangle$  forms an
uncountable semi-bidiscrete sequence in $K$ that is not
$\omega$-determined. By the definition of a semi-bidiscrete
sequence, the $ (x^0_\alpha,x^1_\alpha)$'s are distinct pairs of
distinct points. Therefore there must be $s\in 2^\omega$ such that
$A=\{\alpha:\,x^0_\alpha\rest\omega=x^1_\alpha\rest\omega=s\}$ is
uncountable. We have at least one $l < 2$ such that
$\{x^l_\alpha:\,\alpha\in A\}$ is uncountable, so assume, without
loss of generality, that this is true for $l=0$.

Let $\alpha, \beta,\gamma$ be three distinct members of $A$. Then
by Observation \ref{delta} we have $$x^0_\alpha\rest
\Delta(x^0_\alpha, x^0_\beta)=x^0_\beta\rest \Delta(x^0_\alpha,
x^0_\beta) =s_{ \Delta(x^0_\alpha, x^0_\beta)}$$ and similarly
$$x^0_\alpha\rest \Delta(x^0_\alpha, x^0_\gamma)=x^0_\gamma\rest
\Delta(x^0_\alpha, x^0_\gamma)=s_{ \Delta(x^0_\alpha,
x^0_\gamma)}.$$ By {\bf R8} we conclude that $\Delta(x^0_\alpha,
x^0_\beta)$ is the same for all $\beta \in A\setminus\{\alpha\}$
and we denote this common value by $\Delta_\alpha$. Thus for
$\beta \in A\setminus\{\alpha\}$ we have $x^0_\beta\rest
\Delta_\alpha=s_{\Delta_\alpha}$, but applying the same reasoning
to $\beta$ we obtain $x^0_\alpha\rest
\Delta_\beta=s_{\Delta_\beta}$ and hence by {\bf R8} again we have
$\Delta_\alpha=\Delta_\beta$. Let $\delta^\ast$ denote the common
value of $\Delta_\alpha$ for $\alpha \in A$.

Again, taking distinct $\alpha,\beta,\gamma\in A$ we have
$x^0_\alpha\rest\delta^\ast=
x^0_\beta\rest\delta^\ast=x^0_\gamma\rest\delta^\ast$ and that
$x^0_\alpha(\delta^\ast), x^0_\beta(\delta^\ast)$ and
$x^0_\gamma(\delta^\ast)$ are pairwise distinct. This is, however,
impossible as the latter have values in $\{0,1\}$.
$\eop_{\ref{nosupernice}}$
\end{Proof of the Claim}

Finally we show that the space $K$ is a 2-to-1 continuous preimage
of a compact metric space. We simply define
$\varphi:\,K\into2^\omega$ as $\varphi(x)=x\rest\omega$. This is
clearly continuous. To show that it is 2-to-1 we first prove the
following:

\begin{Claim}\label{ultra} In the space $K$ above, for any $\alpha\neq\beta$ we have
$s_\alpha\rest\omega\neq s_\beta\rest\omega$.
\end{Claim}

\begin{Proof of the Claim} Otherwise suppose that $\alpha<\beta$ and yet
$s_\alpha\rest\omega= s_\beta\rest\omega$. By {\bf R8} we have
$s_\alpha\nsubseteq s_\beta$, so
$\omega\le\delta=\Delta(s_\alpha,s_\beta)<\beta$. By Observation
\ref{delta} applied to any $x\supseteq s_\alpha$ and $y\supseteq
s_\beta$ from $K$, we have
$s_\alpha\rest\delta=x\rest\delta=y\rest\delta=s_\beta\rest\delta=s_\delta$.
But this would imply $s_\delta\subseteq s_\beta$, contradicting
{\bf R8}. $\eop_{\ref{ultra}}$
\end{Proof of the Claim}

Now suppose that $\varphi$ is not 2-to-1, that is there are three
elements $x,y,z \in K$ such that $x\rest\omega=y\rest\omega=
z\rest\omega$. Let $\alpha=\delta(x,y)$ and $\beta=\delta(x,z)$,
so $\alpha, \beta \ge\omega$. By Observation \ref{delta} we have
$x\rest\alpha=y\rest\alpha=s_\alpha$,
$x\rest\beta=z\rest\beta=s_\beta$, so by requirement {\bf R8} we
conclude $\alpha=\beta$. Note that then $y(\alpha)=z(\alpha)$ and
so $\delta=\Delta(y,z)>\alpha$ and $y\rest\delta=
s_\delta\supseteq s_\beta$, in contradiction with {\bf R8}.
Therefore $\varphi$ is really 2-to-1.

\subsubsection{Meeting the requirements}
Now we show how to meet all these requirements. It suffices to
show what to do at any successor stage $\alpha+1$ for $\alpha\in
[\omega,\omega_1)$, assuming all the requirements have been met at
previous stages.

First we choose $s_\alpha$. By {\bf R5} for any $\gamma<\alpha$ we
have $\mu_\gamma(\{s_\gamma\})=0$ and
$\mu_\alpha((\pi_\gamma^\alpha)^{-1}(s_\gamma))=0$. Hence the set
of points $s\in K_\alpha$ for which $s\rest \gamma=s_\gamma$ for
some $\gamma<\alpha$ has measure 0, so we simply choose $s_\alpha$
outside of
$\bigcup_{\gamma<\alpha}(\pi^\alpha_\gamma)^{-1}(s_\gamma)$ , as
well as outside of
$\bigcup_{\gamma<\alpha}(\pi^\alpha_\gamma)^{-1}(N_\gamma)$ (to
meet {\bf R6}), which is possible as the $\mu_\alpha$ measure of
the latter set is also 0.

Now we shall use an idea from \cite{DzK}. We fix a strictly
decreasing sequence $\langle V_n:\,n\in\omega\rangle$ of clopen
sets in $K_\alpha$ such that $V_0=K_\alpha$ and
$\bigcap_{n<\omega}V_n=\{s_\alpha\}$. We shall choose a function
$f:\,\omega\to\omega$ such that letting
$$A_\alpha=\bigcup_{n<\omega} (V_{f(2n)}\setminus
V_{f(2n+1)})\cup\{s_\alpha\}$$ and $$B_\alpha=\bigcup_{n<\omega}
(V_{f(2n+1)}\setminus V_{f(2n)})\cup\{s_\alpha\}$$ will meet all
the requirements. Once we have chosen $A_\alpha$ and $B_\alpha$,
we let $$K_{\alpha+1}=A_\alpha\times \{0\}\cup B_\alpha\times
\{1\}.$$ For a basic clopen set $[s]=\{g\in
K_{\alpha+1}:\,g\supseteq s\}$, where $s$ is a finite partial
function from $\alpha+1$ to 2 and $\alpha\in\dom(s)$, we let
$\mu_{\alpha+1}([s])=1/2\cdot \mu_\alpha([s\rest\alpha])$. We
prove below that this extends uniquely to a Baire measure on
$K_{\alpha+1}$.

The following is basically the same (in fact simpler) argument
which appears in \cite{DzK}. We state and prove it here for the
convenience of the reader.

\begin{Claim}\label{fastfunction} The above choices of $A_\alpha$, $B_\alpha$, and
$\mu_{\alpha+1}$, with the choice of any function $f$ which is
increasing fast enough, will satisfy all the requirements {\bf
R1}-{\bf R8}.
\end{Claim}

\begin{Proof of the Claim} Requirements {\bf R1}-{\bf R4} are clearly met with any choice of $f$.
To see that {\bf R5} is met, let us first prove that
$\mu_{\alpha+1}$ as defined above indeed extends uniquely to a
Baire measure on $K_{\alpha+1}$. We have already defined
$\mu_{\alpha+1}([s])$ for $s$ satisfying $\alpha\in\dom(s)$. If
$\alpha\notin\dom(s)$ then we let
$\mu_{\alpha+1}([s])=\mu_{\alpha}(\pi^{\alpha+1}_\alpha [s])$. It
is easily seen that this is a finitely additive measure on the
basic clopen sets, which then extends uniquely to a Baire measure
on $K_{\alpha+1}$. It is also clear that this extension satisfies
{\bf R5}.

Requirements {\bf R6} and {\bf R8} are met by the choice of
$s_\alpha$, so it remains to see that we can meet {\bf R7}. For
each $\gamma\in[\omega,\alpha]$, if $s_\alpha\in
(\pi_\gamma^\alpha)^{-1}(Q_\gamma)$, fix an $\omega$-sequence
$\bar{t}_\gamma$ of distinct points in
$(\pi_\gamma^\alpha)^{-1}(Q_\gamma)$ converging to $s_\alpha$.
Suppose that $\bar{t}_\gamma$ is defined and that both
$A_\alpha\setminus B_\alpha$ and $B_\alpha\setminus A_\alpha$
contain infinitely many points from $\bar{t}_\gamma$. Then we
claim that $A_\alpha\cap (\pi_\gamma^\alpha)^{-1}(Q_\gamma)$ and
$B_\alpha\cap (\pi_\gamma^\alpha)^{-1} (Q_\gamma)$ are
complementary regular closed subsets of $(\pi_\gamma^\alpha )^{-1}
(Q_\gamma)$. Note that we have already observed that $Q_\gamma$
does not have isolated points, so neither does
$(\pi_\gamma^\alpha)^{-1} (Q_\gamma)$. Hence, since
$\{s_\alpha\}\supseteq A_\alpha\cap
(\pi_\gamma^\alpha)^{-1}(Q_\gamma)\cap B_\alpha\cap
(\pi_\gamma^\alpha)^{-1} (Q_\gamma)$, we may conclude that this
intersection is nowhere dense in both $A_\alpha\cap
(\pi_\gamma^\alpha)^{-1}(Q_\gamma)$ and $B_\alpha\cap
(\pi_\gamma^\alpha)^{-1} (Q_\gamma)$. Finally, $A_\alpha\cap
(\pi_\gamma^\alpha)^{-1}(Q_\gamma)$ and $B_\alpha\cap
(\pi_\gamma^\alpha)^{-1} (Q_\gamma)$ are regular closed because we
have assured that $s_\alpha$ is in the closure of both.

Therefore we need to choose $f$ so that for every relevant
$\gamma$, both $A_\alpha\setminus B_\alpha$ and $B_\alpha\setminus
A_\alpha$ contain infinitely many points of $\bar{t}_\gamma$.
Enumerate all the relevant sequences $\bar{t}_\gamma$ as
$\{\bar{z}^k\}_{k<\omega}$. Our aim will be achieved by choosing
$f$ in such a way that, for every $n$, both sets
$V_{f(2n)}\setminus V_{f(2n+1)}$ and $V_{f(2n+1)}\setminus
V_{f(2n+2)}$ contain a point of each $\bar{z}^k$ for $k\le n$.
$\eop_{\ref{fastfunction}}$
\end{Proof of the Claim}

This finishes the proof of the theorem.
$\eop_{\ref{CH}}$
\end{proof}

\section{Bidiscrete systems}\label{bidiscr} The main result of this section is Theorem \ref{lemma2}
below.  In the course of proving Theorem 10 in \S7 of
\cite{StevoMM}, Todor\v cevi\'c actually proved that if $K$ is not
hereditarily separable then it has an uncountable bidiscrete
system. Thus his proof yields Theorem \ref{lemma2} for
$d(K)=\aleph_1$ and the same argument can be easily extended to a
full proof of \ref{lemma2}.

Let us first state some general observations about bidiscrete systems.

\begin{observation}
\label{closedsubspace}
Suppose that $K$ is a compact Hausdorff space and $H\subseteq K$ is closed, while
$\{ (x_\alpha^0, x_\alpha^1):\,\alpha<\kappa\}$
is a bidiscrete system in $H$, as exemplified by functions $f_\alpha\,(\alpha<\kappa)$.
Then there are functions $g_\alpha\,(\alpha<\kappa)$ in $C(K)$
such that $f_\alpha\subseteq g_\alpha$ and $g_\alpha\,(\alpha<\kappa)$ exemplify that
 $\{ (x_\alpha^0, x_\alpha^1):\,\alpha<\kappa\}$
is a bidiscrete system in $K$.
\end{observation}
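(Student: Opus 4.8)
The plan is to invoke the Tietze extension theorem. Since $K$ is compact Hausdorff it is normal, and $H$, being closed in $K$, is a closed subset of a normal space while each $f_\alpha$ lies in $C(H)$. Thus Tietze guarantees, for every $\alpha<\kappa$, a function $g_\alpha\in C(K)$ extending $f_\alpha$, i.e.\ $g_\alpha\rest H=f_\alpha$, which is exactly the requirement $f_\alpha\subseteq g_\alpha$ when functions are regarded as sets of ordered pairs. (If one wished, one could further arrange $g_\alpha$ to have the same range bounds as $f_\alpha$, but this is not needed here.)

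The key point I would then verify is that the two defining conditions of a bidiscrete system in Definition \ref{nicetop} are preserved by passing from $f_\alpha$ to $g_\alpha$. These conditions constrain only the values $g_\alpha(x_\beta^l)$ at the points $x_\beta^l$, and all of these points belong to $H$. Since $g_\alpha$ agrees with $f_\alpha$ on $H$, we have $g_\alpha(x_\beta^l)=f_\alpha(x_\beta^l)$ for every $\beta<\kappa$ and $l\in\{0,1\}$. Consequently $g_\alpha(x_\alpha^l)=f_\alpha(x_\alpha^l)=l$, and for $\alpha\neq\beta$ we get $g_\alpha(x_\beta^0)=f_\alpha(x_\beta^0)=f_\alpha(x_\beta^1)=g_\alpha(x_\beta^1)$. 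Hence the family $\{g_\alpha:\,\alpha<\kappa\}$ witnesses that $\{(x_\alpha^0,x_\alpha^1):\,\alpha<\kappa\}$ is a bidiscrete system in $K$.

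I do not expect a genuine obstacle in this argument: the only two ingredients are that Tietze extension is available (because $K$ is compact Hausdorff, hence normal, and $H$ is closed in $K$) and that bidiscreteness is a purely pointwise condition evaluated at points of $H$, so it transfers verbatim to any continuous extension of the witnessing functions. The statement is therefore an immediate consequence of Tietze together with this observation about the locality of the defining conditions.
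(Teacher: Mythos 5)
Your proof is correct and follows exactly the paper's own argument: extend each $f_\alpha$ to $g_\alpha\in C(K)$ via Tietze's Extension Theorem (using that $K$ is normal and $H$ is closed) and note that the bidiscreteness conditions are pointwise conditions at points of $H$, hence are inherited by the extensions. You have simply spelled out the verification that the paper leaves to the reader.
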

%%%

\begin{Proof} Since $H$ is closed we can, by Tietze's Extension Theorem,
extend each $f_\alpha$ continuously to
a function $g_\alpha$ on $K$. The conclusion follows
from the definition of a bidiscrete system.
$\eop_{\ref{closedsubspace}}$
\end{Proof}

\begin{Claim}\label{generaldiscrete} Suppose that $K$ is a compact
space and $F_i\subseteq G_i \subseteq K\,$ for $i\in I$ are such
that the $G_i$'s are disjoint open, the $F_i$'s are closed and in
each $F_i$ we have a bidiscrete system $S_i$. Then $\bigcup_{i\in
I}S_i$ is a bidiscrete system in $K$.
\end{Claim}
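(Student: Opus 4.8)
The plan is to produce, for each $i\in I$ and each pair belonging to $S_i$, a single continuous function on all of $K$ that witnesses bidiscreteness of that pair inside the whole union. Write $S_i=\{(x^{i,0}_\alpha,x^{i,1}_\alpha):\alpha<\kappa_i\}$, exemplified by functions $f^i_\alpha\in C(F_i)$, and index the union $\bigcup_{i\in I}S_i$ by the disjoint union $\bigsqcup_{i\in I}\{i\}\times\kappa_i$. (Note that pairs coming from distinct families are automatically distinct, since their points lie in the disjoint sets $G_i$.) For a fixed $i$, the witnessing function attached to a pair of $S_i$ must do two jobs at once: behave correctly on the points of $S_i$ itself, and fail to separate every pair of $S_j$ for $j\neq i$.

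The idea is to extend each $f^i_\alpha$ to a function $g^i_\alpha\in C(K)$ that agrees with $f^i_\alpha$ on $F_i$ but is identically $0$ off $G_i$. Since $G_i$ is open and $F_i\subseteq G_i$, the sets $F_i$ and $K\setminus G_i$ are disjoint closed subsets of the compact, hence normal, space $K$. Thus the function equal to $f^i_\alpha$ on $F_i$ and to $0$ on $K\setminus G_i$ is continuous on the closed set $F_i\cup(K\setminus G_i)$, and Tietze's Extension Theorem (exactly as used in Observation \ref{closedsubspace}) supplies a continuous extension $g^i_\alpha$ to all of $K$. Equivalently, one may set $g^i_\alpha=h_i\cdot\tilde f^i_\alpha$, where $\tilde f^i_\alpha$ is any Tietze extension of $f^i_\alpha$ and $h_i$ is a Urysohn function that is $1$ on $F_i$ and $0$ off $G_i$.

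It then remains to verify the two defining clauses of Definition \ref{nicetop}(1) for the family $\{g^i_\alpha\}$. The first clause, and the intra-family instances of the second clause, are immediate: on $F_i$ we have $g^i_\alpha=f^i_\alpha$ and all the relevant points $x^{i,l}_\beta$ lie in $F_i$, so these follow directly from the bidiscreteness of $S_i$. The crux is the cross-family case. For $j\neq i$ the points $x^{j,0}_\beta$ and $x^{j,1}_\beta$ lie in $F_j\subseteq G_j\subseteq K\setminus G_i$, using that $G_i$ and $G_j$ are disjoint; hence $g^i_\alpha$ vanishes at both and in particular does not separate them. This is the only place the disjointness hypothesis is invoked, and it is the key point of the argument; everything else is the routine pasting-and-extension already established for a single closed subspace. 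I do not expect any serious obstacle beyond keeping this cross-family containment $F_j\subseteq K\setminus G_i$ explicit, since that is precisely what forces the unwanted separations to collapse.
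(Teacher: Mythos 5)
Your proof is correct and follows essentially the same route as the paper: a Tietze extension of each witnessing function combined with a Urysohn cutoff that is $1$ on $F_i$ and $0$ off $G_i$, with the disjointness of the $G_i$'s forcing the cross-family values to vanish. The only difference is cosmetic (you also note the direct Tietze extension from $F_i\cup(K\setminus G_i)$, and you are slightly more careful than the paper about which function gets multiplied by the cutoff), so no further comment is needed.
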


\begin{proof} For $i\in I$ let the bidiscreteness of $S_i$ be witnessed
by $\{g^i_{\alpha}\,:\,\alpha<\kappa_i\} \subseteq C(F_i)$. We can,
as in Observation \ref{closedsubspace}, extend each $g^i_{\alpha}$
to $h^i_{\alpha} \in C(K)$ which exemplify that $S_i$ is a
bidiscrete system in $K$.

Now we would like to put all these bidscrete systems together, for
which we need to find appropriate witnessing functions. For any
$i\in I$ we can apply Urysohn's Lemma to find functions $f_i \in
C(K)$ such that $f_i$ is 1 on $F_i$ and 0 on the complement of
$G_i$. Let us then put, for any $\alpha$ and $i$,
$f_{\alpha}^i=g^i_{\alpha}\cdot f_i$. Now, it is easy to verify
that the functions $\{f^i_{\alpha}:\,\alpha<\kappa_i, i\in I\}$
witness that $\bigcup_{i\in I}S_i$ is a bidiscrete system in $K$.
$\eop_{\ref{generaldiscrete}}$
\end{proof}

Clearly, Observation \ref{closedsubspace} is the special case of
Claim \ref{generaldiscrete} when $I$ is a singleton and $G_i = K$.

\begin{Claim}\label{splitintervalsplit}
If the compact space $K$ has a discrete subspace of size $\kappa
\ge \omega$ then it has a bidiscrete system of size $\kappa$, as
well.
\end{Claim}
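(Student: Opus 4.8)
The plan is to build the bidiscrete system directly from the discrete subspace, using only that a compact Hausdorff space is normal, rather than trying to invoke Claim \ref{generaldiscrete}. Write the given discrete subspace as $D=\{d_\alpha:\alpha<\kappa\}$. Since $\kappa\ge\omega$ is infinite we have $2\cdot\kappa=\kappa$, so we may relabel $D$ as $\{x^0_\alpha,x^1_\alpha:\alpha<\kappa\}$, a family of $\kappa$ pairs of pairwise distinct points. These pairs will be the candidate bidiscrete system; it remains only to produce the witnessing functions.

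The key observation is that discreteness of $D$ as a subspace of $K$ means precisely that for every $d\in D$ the point $d$ lies outside the closure $\overline{D\setminus\{d\}}$ taken in $K$. Thus $\{d\}$ and $\overline{D\setminus\{d\}}$ are disjoint closed sets in the normal space $K$, and by Urysohn's Lemma there is a continuous $h_d\colon K\to[0,1]$ with $h_d(d)=1$ and $h_d\equiv 0$ on $\overline{D\setminus\{d\}}$. For each $\alpha<\kappa$ I would set $f_\alpha=h_{x^1_\alpha}$.

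It then remains to verify the two clauses of Definition \ref{nicetop}(1). By construction $f_\alpha(x^1_\alpha)=1$, while $x^0_\alpha\in D\setminus\{x^1_\alpha\}$ gives $f_\alpha(x^0_\alpha)=0$, so the first clause holds. For $\beta\neq\alpha$ both $x^0_\beta$ and $x^1_\beta$ lie in $D\setminus\{x^1_\alpha\}$ (all the listed points being distinct), whence $f_\alpha(x^0_\beta)=0=f_\alpha(x^1_\beta)$, giving the second clause. Hence $\{(x^0_\alpha,x^1_\alpha):\alpha<\kappa\}$ is a bidiscrete system of size $\kappa$, as witnessed by $\{f_\alpha:\alpha<\kappa\}$.

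The only point requiring care — and the natural first instinct one must resist — is that a discrete subspace need not come equipped with a disjoint family of open neighborhoods, so Claim \ref{generaldiscrete} cannot be applied in the obvious way by packaging a one-element bidiscrete system into each of $\kappa$ pairwise disjoint open sets. The resolution is that no disjointness is needed at all: discreteness already yields the single separation property $d\notin\overline{D\setminus\{d\}}$, and this is exactly what normality converts into a function that vanishes on every other point of $D$, which in turn automatically satisfies the indiscernibility clause $f_\alpha(x^0_\beta)=f_\alpha(x^1_\beta)$ for all $\beta\neq\alpha$.
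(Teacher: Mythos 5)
Your proof is correct and follows essentially the same route as the paper: pair up the points of the discrete set $D$ and, for each pair, produce a continuous function equal to $1$ at one point of the pair and $0$ at every other point of $D$ (the paper gets this function from complete regularity applied to $x^1_\alpha$ inside its witnessing neighborhood $U$, which is the same thing as your Urysohn separation of $\{x^1_\alpha\}$ from $\overline{D\setminus\{x^1_\alpha\}}$, since $K\setminus U$ is a closed set containing that closure). The verification of the two clauses of Definition \ref{nicetop}(1) is then identical in both arguments.
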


\begin{Proof}
Suppose that $D = \{x_\alpha : \alpha < \kappa\}$ (enumerated in a
one-to-one manner) is discrete in $K$ with open sets $U_\alpha$
witnessing this, i.e. $D \cap U_\alpha = \{x_\alpha\}$ for all
$\alpha < \kappa$. For any $\alpha < \kappa$ we may fix a function
$f_\alpha \in C(K)$ such that $f_\alpha(x_{2\alpha+1}) = 1$ and
$f_\alpha(x) = 0$ for all $x \notin U_{2\alpha+1}$. Obviously,
then $\{f_\alpha : \alpha < \kappa\}$ exemplifies that
$\{(x_{2\alpha},x_{2\alpha+1}) : \alpha < \kappa \}$ is a
bidiscrete system in $K$.
\end{Proof}

The converse of Claim \ref{splitintervalsplit} is false, however
the following is true.
%%%
\begin{claim}
\label{discbidisc} Suppose that $B = \{ (x^0_\alpha,
x^1_\alpha):\,\alpha<\kappa\}$ is a bidiscrete system in $K$. Then
$B$ is a discrete subspace of $K^2$.
\end{claim}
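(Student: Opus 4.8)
The plan is to show directly that every point of $B$ is isolated in the subspace topology that $B$ inherits from $K^2$. Fix $\alpha<\kappa$ together with the witnessing functions $\{f_\beta:\,\beta<\kappa\}\subseteq C(K)$ supplied by the definition of a bidiscrete system (Definition \ref{nicetop}). The key idea is to assemble from the single function $f_\alpha$ a continuous function on the \emph{product} $K^2$ which takes one distinguished value at the pair $(x^0_\alpha,x^1_\alpha)$ and a different fixed value at every other pair of the system. This is where the two defining clauses of bidiscreteness are designed to cooperate.

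Concretely, I would define $F_\alpha:\,K^2\to\mathbb{R}$ by $F_\alpha(x,y)=f_\alpha(y)-f_\alpha(x)$. Since $f_\alpha$ is continuous and the two coordinate projections of $K^2$ are continuous, $F_\alpha$ is continuous. Evaluating at the points of $B$, the first clause of Definition \ref{nicetop} gives $F_\alpha(x^0_\alpha,x^1_\alpha)=f_\alpha(x^1_\alpha)-f_\alpha(x^0_\alpha)=1$, whereas for $\beta\neq\alpha$ the second clause gives $f_\alpha(x^0_\beta)=f_\alpha(x^1_\beta)$ and hence $F_\alpha(x^0_\beta,x^1_\beta)=0$.

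With this in hand, the set $W_\alpha=F_\alpha^{-1}\big((1/2,3/2)\big)$ is open in $K^2$ and satisfies $W_\alpha\cap B=\{(x^0_\alpha,x^1_\alpha)\}$: the $\alpha$-th pair lies in $W_\alpha$ because $F_\alpha$ equals $1$ there, while no other pair does because $F_\alpha$ vanishes on each of them. This exhibits $(x^0_\alpha,x^1_\alpha)$ as an isolated point of $B$. The same computation also shows that the pairs are pairwise distinct, since any two of them are separated by the value of a suitable $F_\alpha$; thus $B$ consists of pairwise distinct points each of which is isolated, and so $B$ is a discrete subspace of $K^2$.

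I expect no serious obstacle here: the whole content lies in observing that the antisymmetric combination $f_\alpha(y)-f_\alpha(x)$ turns the two defining conditions of bidiscreteness into a clean separation of the $\alpha$-th pair from all the others. The only thing to keep in mind is that the separating open set must be produced in $K^2$ via the product structure rather than inside a single factor, which is automatic once $F_\alpha$ is regarded as a continuous function on the product.
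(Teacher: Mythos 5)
Your proof is correct and is essentially the paper's argument: the paper separates the $\alpha$-th pair by the open rectangle $f_\alpha^{-1}((-\infty,1/2))\times f_\alpha^{-1}((1/2,\infty))$, while you take the preimage of $(1/2,3/2)$ under $(x,y)\mapsto f_\alpha(y)-f_\alpha(x)$; both rest on exactly the same two facts from the definition of bidiscreteness. The difference is purely cosmetic, so nothing further is needed.
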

%%%

\begin{Proof} Assume that the functions $\{f_\alpha:\,\alpha<\kappa\}
\subseteq C(K)$ exemplify the  bidiscreteness of $B$. Then
$O_\alpha=f_\alpha^{-1}((-\infty,1/2))\times
f_\alpha^{-1}((1/2,\infty))$ is an open set in $K^2$ containing
$(x^0_\alpha, x^1_\alpha)$. Also, if $\beta\neq\alpha$ then
$(x^0_\beta, x^1_\beta) \notin O_\alpha$, hence $B$ is a discrete
subspace of $K^2$. $\eop_{\ref{discbidisc}}$
\end{Proof}

Now we turn to formulating and proving the main result of this
section.

\begin{Theorem}\label{lemma2} If $K$ is an infinite compact Hausdorff space
then $K$ has a bidiscrete system of size $d(K)$. If $K$ is moreover 0-dimensional
then there is a very nice bidiscrete system in $K$ of size $d(K)$.
\end{Theorem}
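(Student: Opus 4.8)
The plan is to prove both assertions by a single maximality argument, after disposing of the separable case at the outset. If $d(K)=\aleph_0$, then, being an infinite Hausdorff space, $K$ contains a countably infinite discrete subspace, and Claim~\ref{splitintervalsplit} immediately yields a bidiscrete system of size $\aleph_0=d(K)$; in the $0$-dimensional case the separating neighbourhoods may be taken clopen, so the system is even very nice. From now on I assume $\kappa:=d(K)>\aleph_0$.

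Consider a bidiscrete system $B=\{(x_\alpha^0,x_\alpha^1):\alpha<\lambda\}$ in $K$, witnessed by $\{f_\alpha:\alpha<\lambda\}\subseteq C(K)$, that is maximal: the union of a chain of bidiscrete systems is again bidiscrete (one keeps the accumulated witnesses, and any two indices already occur together in one member of the chain), so Zorn's Lemma applies. It suffices to show $\lambda\ge\kappa$. Suppose instead $\lambda<\kappa$, and put $D=\{x_\alpha^l:\alpha<\lambda,\ l<2\}$, so that $|D|\le\lambda<\kappa$, whence $d(\overline D)<\kappa$ and $\overline D\ne K$. Let $F=(f_\alpha)_{\alpha<\lambda}\colon K\to\mathbb R^\lambda$ and $Z=F(K)$; as a continuous surjection from a compact space onto a Hausdorff space, $F$ is a closed, hence quotient, map, and $w(Z)\le\max(\lambda,\aleph_0)<\kappa$.

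The heart of the matter is to show that maximality forces $F$ to be injective off $\overline D$. Suppose there were $p\notin\overline D$ and $q\ne p$ in $K$ with $F(p)=F(q)$, i.e. with every $f_\alpha$ agreeing at $p$ and $q$. Since $K$ is compact Hausdorff, the disjoint closed sets $\{p\}$ and $\overline D\cup\{q\}$ admit a Urysohn function $g\in C(K)$ with $g(p)=1$ and $g\equiv 0$ on $\overline D\cup\{q\}$. Then $\{(x_\alpha^0,x_\alpha^1):\alpha<\lambda\}\cup\{(q,p)\}$, witnessed by the $f_\alpha$ together with $g$, is a strictly larger bidiscrete system: $g$ separates the new pair and vanishes on each old pair (both of whose points lie in $\overline D$), while every old $f_\alpha$ agrees on $(q,p)$ by the choice of $p,q$; this contradicts maximality. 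Hence $F^{-1}(F(p))=\{p\}$ for every $p\in U:=K\setminus\overline D$, so $U$ is a saturated open set, $F(U)$ is open in $Z$, and $F\rest U\colon U\to F(U)$ is a bijective quotient map, hence a homeomorphism. Consequently $d(U)\le w(U)\le w(Z)<\kappa$. On the other hand $K=\overline D\cup\overline U$ with $d(\overline D)<\kappa$, so $d(\overline U)=\kappa$, and since $U$ is dense in $\overline U$ this forces $d(U)\ge\kappa$, a contradiction. Therefore $\lambda\ge\kappa$ and $B$ contains a bidiscrete system of size $d(K)$.

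For the $0$-dimensional refinement I would run the identical maximality argument with clopen sets in place of functions: replace each $f_\alpha$ by a clopen $H_\alpha$, take $F\colon K\to 2^\lambda$ the corresponding map into the Cantor cube (again a quotient map onto $Z=F(K)$ with $w(Z)<\kappa$), and at the extension step use $0$-dimensionality to separate the disjoint closed sets $\{p\}$ and $\overline D\cup\{q\}$ by a clopen $H$, which witnesses very-niceness exactly as required. The one delicate point throughout is the extension step, and the device that makes it work is the insistence that one member of the new pair lie outside $\overline D$: this simultaneously trivialises the construction of the separating function or clopen set (it may be made to vanish on, respectively miss, all of $\overline D$, hence every earlier pair) and, through maximality, renders $U$ saturated, which is precisely what upgrades the continuous bijection $F\rest U$ to a homeomorphism and delivers the density contradiction.
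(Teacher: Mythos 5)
Your proof is correct, but it takes a genuinely different route from the paper's. The paper argues in two stages: it first proves Lemma \ref{lemma1} by a transfinite recursion of length $\kappa$ in which, at each step, the minimality of compact Hausdorff topologies (Fact \ref{coarse}) yields a pair of points not separated by the fewer than $\kappa$ functions chosen so far --- but that lemma only applies when every non-empty open subspace has weight $\ge\kappa$ --- and it then reduces the general case to this one via a $\pi$-base of open sets with hereditarily stable density, a maximal disjoint subfamily, and the gluing Claim \ref{generaldiscrete}. You replace all of this by a single Zorn's Lemma argument on witnessed systems: maximality forces the evaluation map $F$ into $\mathbb R^\lambda$ to have singleton fibres over $F(U)$ where $U=K\setminus\overline D$, so $U$ is saturated and $F\rest U$ is a homeomorphism onto a subspace of a space of weight $<\kappa$, contradicting $d(U)\ge d(\overline U)\ge\kappa$. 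The engine is the same in spirit --- compactness upgrades a continuous bijection to a homeomorphism, which is exactly what Fact \ref{coarse} encodes --- but your architecture avoids the decomposition entirely and handles the general case in one stroke, at the price of giving only the global statement rather than the localized Lemma \ref{lemma1}. The two delicate points both check out: the partial order must carry the witnesses along, so that a union of a chain is witnessed by the accumulated functions (any two pairs already coexist in some member of the chain); and the restriction of the closed quotient map $F$ to the saturated open $U$ is a quotient map onto the open set $F(U)=Z\setminus F(K\setminus U)$, hence a homeomorphism (one can also verify directly that $F\rest U$ is open, since $F(V)=F(U)\setminus F(K\setminus V)$ for $V\subseteq U$ open in $K$). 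The $0$-dimensional refinement goes through verbatim with clopen separators, as in the paper. Two cosmetic remarks: you only need $d(\overline U)\ge\kappa$, not equality, and the pair $(q,p)$ you adjoin is automatically not among the old ones because $p\notin\overline D$.
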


\begin{Proof} The proofs of the two parts of the theorem are the same, except that
in the case of a 0-dimensional space every time
that we take functions witnessing bidiscreteness, we need to observe that these functions
can be assumed to take values only in $\{0,1\}$. We leave it to the reader to check that this is indeed
the case.

The case $d(K)=\aleph_0$ is very easy, as it is well known that
every infinite Hausdorff space has an infinite discrete subspace
and so we can apply Claim \ref{splitintervalsplit}. So, from now
on we assume that $d(K)>\aleph_0$.

Recall that a Hausdorff space $(Y,\sigma)$ is said to be {\em
minimal Hausdorff} provided that there does not exist another
Hausdorff topology $\rho$ on $Y$ such that $\rho\subsetneq
\sigma$, i.e. $\rho$ is strictly coarser than $\sigma$. The
following fact is well known and easy to prove, and it will
provide a key part of our argument:

\begin{Fact}\label{coarse} Any compact Hausdorff space is minimal Hausdorff.
\end{Fact}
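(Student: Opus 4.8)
The plan is to exploit the identity map between the two topologies and the classical fact that a continuous bijection from a compact space onto a Hausdorff space is automatically a homeomorphism. Concretely, suppose $\rho$ is any Hausdorff topology on $Y$ with $\rho \sub \sigma$; I want to show $\rho = \sigma$. Since $\rho \sub \sigma$, every $\rho$-open set is $\sigma$-open, so the identity map $\id : (Y,\sigma) \into (Y,\rho)$ is continuous (and of course a bijection).

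Next I would run the standard compactness argument to show that $\id$ is a closed map, which forces $\sigma \sub \rho$ and hence equality. Let $C$ be any $\sigma$-closed subset of $Y$. Since $(Y,\sigma)$ is compact, $C$ is $\sigma$-compact; applying the continuous map $\id$, its image, which is just $C$ as a set, is $\rho$-compact. But $(Y,\rho)$ is Hausdorff, and compact subsets of a Hausdorff space are closed, so $C$ is $\rho$-closed. Thus every $\sigma$-closed set is $\rho$-closed, equivalently every $\sigma$-open set is $\rho$-open, i.e. $\sigma \sub \rho$. Combined with the hypothesis $\rho \sub \sigma$, this yields $\rho = \sigma$, which is exactly the assertion that $(Y,\sigma)$ admits no strictly coarser Hausdorff topology.

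Since this is a folklore fact, there is no serious obstacle; the only point requiring the hypotheses in full is the passage \emph{$\rho$-compact $\Rightarrow$ $\rho$-closed}, where I use that the \emph{coarser} topology $\rho$ is still Hausdorff (so compact sets are closed), while the \emph{finer} topology $\sigma$ supplies compactness of closed sets. The remaining ingredients (closed subsets of compact spaces are compact, and continuous images of compact sets are compact) are routine and I would cite them without further comment.
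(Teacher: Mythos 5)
Your proof is correct and is precisely the standard argument: the identity from $(Y,\sigma)$ to $(Y,\rho)$ is a continuous bijection from a compact space onto a Hausdorff space, hence a homeomorphism, so no strictly coarser Hausdorff topology exists. The paper itself gives no proof of this Fact, dismissing it as ``well known and easy to prove,'' and yours is exactly the canonical argument it has in mind.
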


\begin{Lemma}\label{lemma1} Suppose that $X$ is a compact Hausdorff space with $d(X)\ge\kappa>\aleph_0$
in which every non-empty open (equivalently: regular closed)
subspace has weight $\ge\kappa$.
%with ${\rm hd}(X)=\kappa$, t
Then $X$ has a bidiscrete system of size $\kappa$.
\end{Lemma}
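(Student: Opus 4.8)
The plan is to build the bidiscrete system of size $\kappa$ by transfinite recursion of length $\kappa$, constructing at each stage $\alpha<\kappa$ a pair $(x^0_\alpha,x^1_\alpha)$ of distinct points together with a witnessing function $f_\alpha\in C(X)$, while maintaining the two defining conditions of Definition \ref{nicetop}(1) against all previously chosen pairs. First I would set up the bookkeeping: having chosen $\langle(x^0_\beta,x^1_\beta):\,\beta<\alpha\rangle$ and $\langle f_\beta:\,\beta<\alpha\rangle$, I want to find a new pair that (a) is separated by a new function $f_\alpha$ sending $x^0_\alpha\mapsto 0$, $x^1_\alpha\mapsto 1$, and (b) does not get separated by any earlier $f_\beta$, i.e. $f_\beta(x^0_\alpha)=f_\beta(x^1_\alpha)$ for all $\beta<\alpha$; and simultaneously, I must be able to keep the new $f_\alpha$ from separating the earlier pairs, i.e. $f_\alpha(x^0_\beta)=f_\alpha(x^1_\beta)$. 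The natural way to handle the second requirement cleanly is to work not on all of $X$ but inside an open piece on which all of $f_\beta\,(\beta<\alpha)$ are constant, so that no earlier function can see the new pair at all.

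The key step uses the weight hypothesis. At stage $\alpha$ the finitely-many-at-a-time obstruction from the earlier functions is controlled by the map $F_\alpha:X\to\mathbb{R}^{\alpha}$, $F_\alpha(x)=\langle f_\beta(x):\,\beta<\alpha\rangle$; its image lies in a space of weight $\le|\alpha|\cdot\aleph_0<\kappa$. Since $d(X)\ge\kappa$ and, by hypothesis, every non-empty open subspace of $X$ has weight $\ge\kappa$, the map $F_\alpha$ cannot be injective on any non-empty open set — indeed I would argue that $F_\alpha$ fails to be irreducible, and hence by the definition of irreducibility there is a non-empty open $U\subseteq X$ on which $F_\alpha$ is constant, equivalently a non-empty open $U$ on which every $f_\beta\,(\beta<\alpha)$ is constant. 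This is exactly a piece where earlier functions are blind. Concretely, because $F_\alpha(X)$ has weight $<\kappa\le w(X)$ while $X$ is compact (hence minimal Hausdorff by Fact \ref{coarse}), the continuous surjection $F_\alpha$ onto its image cannot be a homeomorphism onto a space of smaller weight, and one extracts from this failure a non-empty open set on which it is constant.

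Having fixed such a non-empty open $U$, I pick two distinct points $x^0_\alpha,x^1_\alpha\in U$ — possible since $U$, being non-empty open of weight $\ge\kappa>\aleph_0$, is infinite — and use Urysohn's Lemma (or, in the 0-dimensional case, a clopen set, yielding a $\{0,1\}$-valued function, which is the only change needed for the "very nice" variant) to get $f_\alpha\in C(X)$ with $f_\alpha(x^l_\alpha)=l$ and $f_\alpha\equiv 0$ off $U$. Then $f_\alpha$ is identically $0$ outside $U$, so it is automatically constant on every earlier pair unless that pair meets $U$; to be safe I would simply shrink $U$ at the start so that it avoids the finitely-relevant earlier points, or observe that earlier pairs lie outside $U$ by a further bookkeeping choice, guaranteeing condition $f_\alpha(x^0_\beta)=f_\alpha(x^1_\beta)$. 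Meanwhile each earlier $f_\beta$ is constant on $U$ and hence on $\{x^0_\alpha,x^1_\alpha\}$, giving $f_\beta(x^0_\alpha)=f_\beta(x^1_\beta)$ and closing the recursion.

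The main obstacle I expect is the core extraction step: turning the inequality of weights ($w(F_\alpha(X))<\kappa\le w(\text{open sets})$) into an honest non-empty open set on which all earlier functions are simultaneously constant. This is where minimal Hausdorffness (Fact \ref{coarse}) and the uniform lower bound on the weight of open subspaces must be combined — the point is that if $F_\alpha$ separated points on a dense set, it would induce a coarser Hausdorff topology of smaller weight, contradicting minimality together with the weight hypothesis. Getting the quantifiers right here, and confirming that the set $U$ produced is genuinely open and non-empty rather than merely somewhere-dense, is the delicate part; once it is in hand, the rest of the recursion and the verification of the two bidiscreteness conditions are routine applications of Urysohn's Lemma and the constancy of the earlier $f_\beta$ on $U$.
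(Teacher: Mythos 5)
Your recursion scheme is the right one, but the step you yourself flag as delicate is in fact where the argument breaks: from the hypotheses you cannot extract a non-empty open set $U$ on which all the earlier functions $f_\beta\ (\beta<\alpha)$ are simultaneously constant, and the two mechanisms you offer for producing it do not work. Failure of irreducibility of $F_\alpha$ only yields a proper closed $F\subsetneq X$ with $F_\alpha(F)=F_\alpha(X)$, i.e.\ a non-empty open $U$ every fiber through which also meets $X\setminus U$ --- that is far from constancy on $U$. Likewise, the minimal-Hausdorff argument (Fact \ref{coarse}) only shows that the coarser topology induced by $F_\alpha$ fails to be Hausdorff, i.e.\ that \emph{some two points} are not separated; it does not upgrade to an open set of constancy. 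The implication you rely on (``compact, every non-empty open subspace of weight $\ge\kappa$, continuous image of weight $<\kappa$, therefore constant on some non-empty open set'') is simply false: the double arrow space is compact, every non-empty open subspace has weight $2^{\aleph_0}$, and its two-to-one projection onto $[0,1]$ has second countable image yet is constant on no non-empty open set. (That space fails $d(X)\ge\kappa$, but your extraction step never uses the density hypothesis, so the counterexample applies to it.) A secondary, fixable slip: at stage $\alpha$ there are $|\alpha|$-many earlier points, not finitely many, so ``shrink $U$ to avoid them'' must be replaced by working inside $X\setminus C_\alpha$ where $C_\alpha$ is the closure of the earlier points (non-empty complement because $d(C_\alpha)\le|\alpha|\cdot\aleph_0<\kappa\le d(X)$).

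The paper's proof asks for much less than your $U$ and that is why it goes through. It fixes a non-empty regular closed $F_\alpha\subseteq X\setminus C_\alpha$ (so $w(F_\alpha)\ge\kappa$), and considers the topology $\tau_\alpha$ on $F_\alpha$ generated by the sets $f_\beta^{-1}(-\infty,q)\cap F_\alpha$ and $f_\beta^{-1}(q,\infty)\cap F_\alpha$ for $\beta<\alpha$, $q\in\mathbf{Q}$. This generating family has size $<\kappa$, so $\tau_\alpha$ has weight $<\kappa\le w(F_\alpha)$ and is therefore strictly coarser than the subspace topology; by minimal Hausdorffness of the compact $F_\alpha$ it is not Hausdorff, which hands you exactly two distinct points $x^0_\alpha,x^1_\alpha\in F_\alpha$ with $f_\beta(x^0_\alpha)=f_\beta(x^1_\alpha)$ for all $\beta<\alpha$ --- two points, not an open set. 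One then takes $f_\alpha$ to be $0$ on $C_\alpha\cup\{x^0_\alpha\}$ and $1$ at $x^1_\alpha$ by complete regularity, which simultaneously settles the condition that $f_\alpha$ not separate the earlier pairs. If you replace your extraction step by this one, the rest of your write-up (including the $\{0,1\}$-valued modification in the $0$-dimensional case) goes through.
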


\begin{Proof of the Lemma} We shall choose $x_\alpha^0, x_\alpha^1, f_\alpha$ by induction on $\alpha<\kappa$
so that the pairs $(x_\alpha^0, x_\alpha^1)$ form a bidiscrete
system, as exemplified by the functions $f_\alpha$. Suppose that
$x_\beta^0, x_\beta^1, f_\beta$ have been chosen for
$\beta<\alpha<\kappa$.

Let $C_\alpha$ be the closure of the set $\{x_\beta^0,
x_\beta^1:\,\beta<\alpha\}$. Therefore $d(C_\alpha)<\kappa$ and,
in particular, $C_\alpha\neq X$. Let $F_\alpha\subseteq X\setminus
C_\alpha$ be non-empty regular closed, hence
$w(F_\alpha)\ge\kappa$.

Let $\tau_\alpha$ be the topology on $F_\alpha$ generated by the
family
\[
\mathcal{F}_\alpha = \{f_\beta^{-1}(-\infty,q)\cap
F_\alpha\,,\,f_\beta^{-1}(q,\infty)\cap F_\alpha\,
:\,\beta<\alpha,\, q\in {\mathbf Q}\},
 \]
where $\mathbf Q$ denotes the set of rational numbers. Then
$|\mathcal{F}_\alpha|<\kappa$ (as $\kappa>\aleph_0$), hence the
weight of $\tau_\alpha$ is less than $\kappa$, consequently
$\tau_\alpha$ is strictly coarser than the subspace topology on
$F_\alpha$. Fact \ref{coarse} implies that $\tau_\alpha$ is not a
Hausdorff topology on $F_\alpha$, hence we can find two distinct
points $x^0_\alpha, x^1_\alpha\in F_\alpha$ which are not
$T_2$-separated by any two disjoint sets in $\tau_\alpha$ and, in
particular, in $\mathcal{F}_\alpha$. This clearly implies that
$f_\beta(x^0_\alpha)=f_\beta(x^1_\alpha) $ for all $\beta<\alpha$.

Now we use the complete regularity of $X$ to find $f_\alpha\in
C(X)$ such that $f_\alpha$ is identically 0 on the closed set
$C_\alpha\cup\{x^0_\alpha\}$ and $f_\alpha(x^1_\alpha)=1$. It is
straight-forward to check that $\{f_\alpha : \alpha < \kappa\}$
indeed witnesses the bidiscreteness of $\{(x_\alpha^0, x_\alpha^1)
: \alpha < \kappa\}$. $\eop_{\ref{lemma1}}$
\end{Proof of the Lemma}

Let us now continue the proof of the theorem. We let $\kappa$
stand for $d(K)$ and let
\[
{\mathcal P}=\{\emptyset\neq O\subseteq K:\,O\mbox{ open such that }[\emptyset\neq U
\mbox{ open}\subseteq O\implies d(U)= d(O)]\}.
\]
We claim that ${\mathcal P}$ is a $\pi$-base for $K$, i.e. that
every non-empty open set includes an element of ${\mathcal P}$.
Indeed, suppose this is not case, as witnessed by a non-empty open
set $U_0$. Then $U_0\notin\PP$, so there is a non-empty open set
$\emptyset\neq U_1\subseteq U_0$ with $d(U_1)<d(U_0)$ (the case
$d(U_1)<d(U_0)$ cannot occur). Then $U_1$ itself is not a member
of $\PP$ and therefore we can find a non-empty open set
$\emptyset\neq U_2\subseteq U_1$ with $d(U_2)<d(U_1)$, etc. In
this way we would obtain an infinite decreasing sequence of
cardinals, a contradiction.

Let now $\mathcal O$ be a maximal disjoint family of members of
${\mathcal P}$. Since ${\mathcal P}$ is a $\pi$-base for $K$ the
union of $\mathcal{O}$ is clearly dense in $K$. This implies that
if we fix any dense subset $D_O$ of $O$ for all $O\in {\mathcal
O}$ then $\bigcup \{ D_O : O \in \mathcal{ O} \}$ is dense in $K$,
as well. This, in turn, implies that $\sum \{d(O) : O \in \mathcal
{O}\} \ge d(K) = \kappa$.

If $|\mathcal{O}| = \kappa$ then we can select a discrete subspace
of $K$ of size $\kappa$ by choosing a point in each $O\in \mathcal
O$, so the conclusion of our theorem follows by Corollary
\ref{splitintervalsplit}.

So now we may assume that $|\mathcal O|<\kappa$. In this case,
since $\kappa>\aleph_0$,  letting $\OO'=\{O\in
\OO:\,d(O)>\aleph_0\}$, we still have $\sum \{d(O) : O \in
\mathcal {O'}\} \ge \kappa$. Next, for each $O\in\OO'$ we choose a
non-empty open set $G_O$ such that its closure
$\overline{G}_O\subseteq O$. Then we have, by the definition of
$\PP$, that $d(\overline{G}_O)=d(G_O)=d(O)$. By the same token,
every non-empty open subspace of the compact space
$\overline{G}_O$ has density $d(O)$, and hence weight $\ge d(O)$.
Therefore we may apply Lemma \ref{lemma1} to produce a bidiscrete
system $S_O$ of size $d(O)$ in $\overline{G}_O$. But then Claim
\ref{generaldiscrete} enables us to put these systems together to
obtain the bidscrete system $S = \bigcup \{S_O : O \in
\mathcal{O}'\}$ in $K$ of size $\sum \{d(O) : O \in \mathcal
{O'}\} \ge\kappa$. $\eop_{\ref{lemma2}}$
\end{Proof}

It is immediate from Theorem \ref{lemma2} and Observation
\ref{closedsubspace} that if $C$ is a closed subspace of the
compactum $K$ with $d(C) = \kappa$ then $K$ has a bidiscrete
system of size $\kappa$. We recall that the hereditary density
${\rm hd}(X)$ of a space $X$ is defined as the supremum of the
densities of all subspaces of $X$.

\begin{Fact}\label{hd} For any compact Hausdorff space $K$,
${\rm hd}(K)=\sup\{d(C):\,C\mbox{ closed}\subseteq K\}$.
\end{Fact}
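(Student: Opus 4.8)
The inequality $\sup\{d(C):\,C\subseteq K\mbox{ closed}\}\le {\rm hd}(K)$ is immediate, since every closed subspace is a subspace. All the content is in the reverse inequality, and my plan is to reduce it to a statement about left-separated subspaces. Recall the standard fact that for any space, ${\rm hd}(K)$ equals the supremum of the cardinalities of its left-separated subspaces: given $Y\subseteq K$ with $d(Y)=\lambda$ regular, one builds $\langle x_\alpha:\alpha<\lambda\rangle\subseteq Y$ by recursion, choosing $x_\alpha\in Y\setminus\overline{\{x_\beta:\beta<\alpha\}}$, which is possible because a set of size $<\lambda$ cannot be dense in $Y$; regularity of $\lambda$ lets the recursion run to length $\lambda$, and such a sequence is left-separated in $K$ with $d(\{x_\alpha:\alpha<\lambda\})=\lambda$. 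Thus it suffices to prove the following crux lemma: \emph{if $K$ is compact and $L=\{x_\alpha:\alpha<\lambda\}\subseteq K$ is left-separated with $\lambda$ regular, then there is a closed $C\subseteq K$ with $d(C)\ge\lambda$.} Taking the supremum over all regular $\lambda\le {\rm hd}(K)$ then yields ${\rm hd}(K)\le\sup\{d(C)\}$, and hence equality.

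To set up the crux lemma I would, using the regularity of $K$, fix for each $\alpha$ an open $U_\alpha\ni x_\alpha$ with $\overline{U_\alpha}\cap\overline{\{x_\beta:\beta<\alpha\}}=\emptyset$, so that $U_\alpha$ omits every earlier $x_\beta$. The tempting candidate is $C=\overline L$: if $D\subseteq C$ were dense with $|D|<\lambda$, then picking $d_\alpha\in D\cap U_\alpha$ and pigeonholing (using that $\lambda$ is regular and $|D|<\lambda$) gives a single $d^\ast\in D$ lying in $U_\alpha$ for cofinally many $\alpha$, whence $d^\ast\notin\bigcup_{\gamma<\lambda}\overline{\{x_\beta:\beta<\gamma\}}$. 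This shows a small dense set cannot be ``captured at a bounded stage,'' but it is \emph{not} yet a contradiction: the density of $\overline L$ really can drop below $\lambda$, as the $\Sigma$-product inside $2^{\omega_1}$ shows ($d(\Sigma)=\omega_1$ while $\overline\Sigma=2^{\omega_1}$ has $d=\omega$). So the correct witness need not be $\overline L$ itself, and the real task is to locate the closed subspace on which the density actually concentrates.

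I expect this extraction to be \textbf{the main obstacle}. The plan is to split according to how the density of $L$ is carried. When a relatively discrete refinement of $L$ of size $\lambda$ can be isolated (so that the chosen points remain isolated in the closure of that refinement), the closure is a closed set with $\lambda$ isolated points, hence density $\ge\lambda$; this is exactly what happens for the family of characteristic functions $\{x_\alpha\}$ in $2^{\omega_1}$, whose closure $\{x_\alpha:\alpha<\omega_1\}\cup\{0\}$ has density $\omega_1$. In the remaining case one must pass, via compactness, to a closed subspace of $\overline L$ on which the left-separated sequence stays somewhere dense with undiminished density, ruling out the $\Sigma$-product phenomenon locally. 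The indispensable ingredient throughout is compactness, entering precisely as it does elsewhere in this paper through minimal Hausdorffness (Fact \ref{coarse}) in the proof of Lemma \ref{lemma1}: a strictly coarser topology obtained from too few open sets cannot remain Hausdorff. Once the crux lemma is in hand, the identity ${\rm hd}(K)=\sup\{d(C):\,C\mbox{ closed}\subseteq K\}$ follows, and combining it with the remark after Theorem \ref{lemma2} and Observation \ref{closedsubspace} lets one read off a bidiscrete system of size ${\rm hd}(K)$ in $K$.
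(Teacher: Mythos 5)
The paper states this as a known Fact and offers no proof of it at all, so there is no in-paper argument to compare yours against; I can only assess your attempt on its own terms. Your first paragraph is sound: the inequality $\sup\{d(C):C\mbox{ closed}\}\le{\rm hd}(K)$ is trivial, the reduction of ${\rm hd}(K)$ to the supremum of cardinalities of left-separated subspaces is the standard one and is carried out correctly, and your warning that $C=\overline{L}$ cannot in general serve as the witness (density can drop under closure, as the $\Sigma$-product shows) is exactly the right thing to flag. The case in which $L$ admits a discrete subspace $D$ of size $\lambda$ is also handled correctly: each point of $D$ stays isolated in $\overline{D}$, since an open set disjoint from $D\setminus\{x\}$ is disjoint from its closure, so every dense subset of $\overline{D}$ must contain $D$ and hence $d(\overline{D})\ge\lambda$.

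The genuine gap is the remaining case, which you yourself label ``the main obstacle'' and then do not resolve. When $L$ has no discrete subspace of size $\lambda$, the sentence ``one must pass, via compactness, to a closed subspace of $\overline{L}$ on which the left-separated sequence stays somewhere dense with undiminished density'' is a restatement of what has to be proved, not a proof: no closed set is exhibited and no mechanism for producing one is given. The obvious candidates all stall: $\overline{L}$ itself fails; the closed set $F=\bigcap_{\gamma<\lambda}\overline{\{x_\beta:\beta\ge\gamma\}}$ does (after discarding an initial segment of $L$) absorb all of $L$, but one is then in exactly the same situation with $F$ in place of $K$; and the pigeonhole step you sketch only shows that a small dense set must contain a point lying in $U_\alpha$ for cofinally many $\alpha$, which is not by itself a contradiction. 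The appeal to minimal Hausdorffness (Fact \ref{coarse}) is speculative here, since the subspace whose topology is to be coarsened has not been identified. So the nontrivial half of the equality --- precisely the half the paper needs to deduce Corollary \ref{theorem2} --- remains unproved in your write-up; you would need either to supply the missing argument for the small-spread case or to cite the result from the literature, as the paper implicitly does.
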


From this fact and what we said above we immediately obtain the
following corollary of Theorem \ref{lemma2}.

\begin{Corollary}\label{theorem2} If $K$ is a compact Hausdorff space with
${\rm hd}(K)\ge\lambda^+$ for some $\lambda\ge\omega$, then $K$
has a bidiscrete system of size $\lambda^+$.
\end{Corollary}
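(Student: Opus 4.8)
The plan is to derive Corollary \ref{theorem2} directly from Fact \ref{hd} together with the strengthened form of Theorem \ref{lemma2} noted in the remark immediately preceding Fact \ref{hd}, namely that whenever $C$ is a closed subspace of $K$ with $d(C)=\kappa$ then $K$ has a bidiscrete system of size $\kappa$. The whole point is that hereditary density, being a supremum, need not be attained, so we must use the regularity afforded by the hypothesis ${\rm hd}(K)\ge\lambda^+$ to extract a \emph{single} closed subspace witnessing a density of size exactly $\lambda^+$.

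First I would invoke Fact \ref{hd} to rewrite the hypothesis as $\sup\{d(C):\,C\mbox{ closed}\subseteq K\}\ge\lambda^+$. From this supremum being at least the successor cardinal $\lambda^+$, I want to locate one closed $C\subseteq K$ with $d(C)\ge\lambda^+$. There are two cases according to whether the supremum is attained. If there is a closed $C$ with $d(C)\ge\lambda^+$ already, we are done locating it. Otherwise every closed subspace $C$ satisfies $d(C)\le\lambda$, so the densities form a set of cardinals all $\le\lambda$; but then their supremum is at most $\lambda<\lambda^+$, contradicting the hypothesis. Hence some closed $C$ with $d(C)\ge\lambda^+$ must exist. (This is exactly where the successor-cardinal form of the statement is used: a bounded family of cardinals below $\lambda^+$ has supremum at most $\lambda$.)

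Having fixed such a closed $C$ with $d(C)=\kappa\ge\lambda^+$, I would apply the remark preceding Fact \ref{hd}: since $C$ is closed in the compactum $K$ and has density $\kappa$, Theorem \ref{lemma2} applied inside the compact space $C$ yields a bidiscrete system of size $\kappa$ in $C$, and Observation \ref{closedsubspace} lifts it to a bidiscrete system of size $\kappa$ in $K$. Finally, since a bidiscrete system of size $\kappa\ge\lambda^+$ contains, by restriction to any subfamily of index set of order type $\lambda^+$, a bidiscrete system of size exactly $\lambda^+$ (bidiscreteness is inherited by subfamilies, as the witnessing functions for the retained indices continue to satisfy both requirements of Definition \ref{nicetop}), we obtain the desired bidiscrete system of size $\lambda^+$ in $K$.

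I do not expect any genuine obstacle here; the argument is a short deduction from already-established results. The only point requiring a moment's care is the cardinal-arithmetic step establishing attainment of density $\lambda^+$ by a single closed subspace, which relies essentially on $\lambda^+$ being a successor. The passage from a system of size $\kappa\ge\lambda^+$ down to one of size exactly $\lambda^+$ is routine, since any subfamily of a bidiscrete system is again bidiscrete.
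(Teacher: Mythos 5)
Your proposal is correct and is exactly the argument the paper intends: the paper derives the corollary "immediately" from Fact \ref{hd} and the remark that a closed $C\subseteq K$ with $d(C)=\kappa$ gives a bidiscrete system of size $\kappa$ in $K$ (via Theorem \ref{lemma2} and Observation \ref{closedsubspace}), and your write-up simply makes explicit the two routine points the paper suppresses, namely that a supremum $\ge\lambda^+$ of cardinals forces a single closed subspace of density $\ge\lambda^+$, and that subfamilies of bidiscrete systems are bidiscrete.
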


\bigskip

We finish by listing some open questions.

\begin{Question} (1) Does every compact space $K$ admit a
bidiscrete system of size ${\rm hd}(K)$?

{\noindent (2)} Define
\[
{\rm bd}(K)=\sup\{|S|\,:\, S \mbox{ is a bidiscrete system in
}K\}.
\]
Is there always a bidiscrete system in $K$ of size ${\rm bd}(K)$?

{\noindent (3)} Suppose that $K$ is a 0-dimensional compact space
which has a bidiscrete system of size $\kappa$. Does then $K$ also
have a very nice bidiscrete system of size $\kappa$ (i.e. such
that the witnessing functions take values only in $\{0,1\}$)? Is
it true that any bidiscrete system in a 0-dimensional compact
space is very nice?

{\noindent (4)} (This is Problem 4 from \cite{JuSz}): Is there a
$ZFC$ example of a compact space $K$ that has no discrete subspace
of size $d(K)$?

{\noindent (5)} If the square $K^2$ of a compact space $K$
contains a discrete subspace of size $\kappa$, does then $K$ admit
a bidiscrete system of size $\kappa$ (or does at least $C(K)$ have
a biorthogonal system of size $\kappa$)? This question is of
especial interest for $\kappa = \omega_1$.
\end{Question}

%\section{Private Appendix}

\end{document}